\theoremstyle{plain}
\newtheorem{thm}{Theorem}
\newtheorem{lemma}{Lemma}
\newtheorem{prop}[lemma]{Proposition}
\newtheorem{coro}[lemma]{Corollary}
\newtheorem{theo}[lemma]{Theorem}
\newtheorem{rema}[lemma]{Remark}
\newtheorem{defi}[lemma]{Definition}
\newtheorem{thm-Intro}{Theorem} 
\newtheorem{cor-Intro}{Corollary} 
\numberwithin{equation}{section}
\subjclass[2010]{Primary: 32Q05, Secondary: 32Q20}
\keywords{Bergman metric, holomorphic bisectional curvature, holomorphic sectional curvature, symmetrized bidisc}
\begin{document}
\title[Bergman metric on the symmetrized bidisc and its consequences]{Bergman metric on the symmetrized bidisc and its consequences}

\author{Gunhee Cho}
\address{Department of Mathematics\\
University of California, Santa Barbara, \\
552 University Rd, Isla Vista, CA 93106, USA}
\email{gunhee.cho@math.ucsb.edu}

\author{Yuan Yuan}
\address{Department of Mathematics\\
Syracuse University\\
Syracuse, NY 13244, USA}

\email{yyuan05@syr.edu}


\begin{abstract} 
On the symmetrized bidisc $G_2$ with the Bergman metric, the holomorphic sectional curvature is negatively pinched and the holomorphic bisectional curvature is not. The consequences of invariant metrics are provided.
\end{abstract}

\maketitle


\section{Introduction and results}
The complete K\"ahler manifold with negatively pinched curvature is of particular interest in complex geometry (cf. \cite{WY20}). Recently, Wu and Yau obtained many deep results on complete K\"ahler manifolds with negatively pinched holomorphic sectional curvature in \cite{WY17}. In particular, they obtained the existence of complete K\"ahler-Einstein metrics with quasi-bounded geometry. Moreover, invariant metrics are shown to be equivalent. On the other hand, if the holomorphic bisectional curvature is negatively pinched, so is the holomorphic sectional curvature, but the converse is obviously not always true. There are well-known examples as homogeneous manifolds or product manifolds with negatively pinched holomorphic sectional curvature and {not} negatively pinched holomorphic bisectional curvature. It seems that it is not known whether a non-homogeneous or non-product K\"ahler manifold exists or not with negatively pinched holomorphic sectional curvature but positive holomorphic bisectional curvature somewhere and it apparently is a natural question in K\"ahler geometry  \cite{Question}. Our main result offers one complete noncompact example.
In this paper, we study  the Bergman metrics and its geometric consequences on the symmetrized bidisc $G_2$, which is neither homogeneous nor has a product structure. We will denote the unit disk in $\mathbb{C}$ by $\mathbb{D}$ and here is our result:

{\begin{theo} \label{main}
		The holomorphic sectional curvature of the Bergman metric on $G_2=\{(z_1+z_2,z_1z_2) : z_1,z_2\in \mathbb{D} \}$ is negatively pinched and the holomorphic bisectional curvature is positive somewhere.
	\end{theo}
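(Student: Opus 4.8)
The plan is to compute the Bergman kernel of $G_2$ explicitly, derive the Bergman metric, and then analyze the curvature tensor. First I would recall (or re-derive, via the symmetrization map $\pi(z_1,z_2)=(z_1+z_2,z_1z_2)$ from $\mathbb{D}^2$ and the transformation rule for Bergman kernels under proper holomorphic maps) a closed formula for $K_{G_2}(s,p)$ in the coordinates $(s,p)$ on $G_2$; the classical reference gives $K_{G_2}$ as an explicit rational function of $s,p,\bar s,\bar p$. From this I would write the Bergman metric $\omega = \sqrt{-1}\,\partial\bar\partial \log K_{G_2}$ and its coefficients $g_{i\bar j}$ as explicit functions on $G_2$.

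For the holomorphic sectional curvature, the key point is negativity together with a uniform pinching $-B \le H \le -A < 0$. Since $G_2$ is not homogeneous, I cannot reduce to a single point, so I would exploit the automorphism group of $G_2$ (which is large: it contains the Möbius action induced diagonally from $\mathrm{Aut}(\mathbb{D})$ together with the involution, and the orbits foliate $G_2$) to reduce the verification to a one-real-parameter family of points, e.g. points of the form $(0,p)$ with $p\in[0,1)$, or better a suitable slice meeting every orbit. On that slice I would compute $H(v)$ as a function of $p$ and the direction $v$, homogenize in $v$, and show the resulting function extends continuously to the compact closure with values in a fixed negative interval; the pinching constants then come from a compactness/continuity argument after checking the boundary behavior as $p\to 1$.

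For the holomorphic bisectional curvature, it suffices to exhibit one point and one pair of directions where $R(v,\bar v,w,\bar w)>0$. The natural candidate is again a point on the symmetry slice, with $v,w$ chosen adapted to the eigen-directions of the metric there (for instance, tangent to the "diagonal" $\{z_1=z_2\}$ and transverse to it), reducing the claim to the positivity of one explicit algebraic expression in $p$, which I would verify at a convenient value such as $p=0$ and then note it persists on an open set by continuity.

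The main obstacle I anticipate is the curvature computation itself: even with an explicit kernel, $R_{i\bar j k\bar l}$ involves fourth-order derivatives of $\log K_{G_2}$ and the formula is unwieldy, so organizing the computation — choosing good coordinates or a good frame adapted to $\mathrm{Aut}(G_2)$, and controlling the limit as one approaches the distinguished boundary — will be where the real work lies; the positivity-of-bisectional-curvature part, by contrast, should follow from a single well-chosen sample point once the metric is in hand.
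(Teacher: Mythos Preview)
Your overall strategy coincides with the paper's: pull back the Bergman kernel through $\pi$, use $\mathrm{Aut}(G_2)$ to reduce to a one--real--parameter slice, and compute everything explicitly there. The paper carries this out on the slice $(w_1,w_2)=(x,0)$ with $x\in[0,1)$ (equivalently $(z_1,z_2)=(x,0)\in\mathbb{D}^2$), builds an orthonormal frame $X,Y$ by Gram--Schmidt, writes down closed formulas for $H(X)$, $H(Y)$, $B(X,Y)=R(X,\bar X,Y,\bar Y)$ and all cross terms as rational functions of $x$, and then bounds $(3-2x^2)^2\,R(V,\bar V,V,\bar V)$ between two explicit negative constants for every unit $V$. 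So your compactness/continuity sketch for the pinching is in the right spirit, though the paper does not merely appeal to continuity up to the boundary: it produces explicit two-sided bounds, and this is where the ``real work'' you anticipate indeed lies.

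There is, however, a genuine gap in your plan for the bisectional curvature. You propose to verify positivity at the origin ($p=0$) and then invoke continuity. This fails: at the origin the metric is diagonal in $(\partial_{w_1},\partial_{w_2})$, all mixed curvature terms $R(X,\bar X,X,\bar Y)$, $R(Y,\bar Y,Y,\bar X)$, $R(X,\bar Y,X,\bar Y)$ vanish, and the paper's formulas give
\[
H(X)=-\tfrac{2}{15},\qquad H(Y)=-\tfrac{2430}{10125},\qquad B(X,Y)=-\tfrac{13}{15},
\]
all negative. A short computation then shows $R(V,\bar V,W,\bar W)<0$ for \emph{every} pair of directions at the origin. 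The positivity of the bisectional curvature is a boundary phenomenon: the paper's formula for $B(X,Y)$ has a numerator $-(x^2-1)^2 f_1(x)$ with $f_1$ a degree--$20$ polynomial that changes sign on $[0,1)$, and $B(X,Y)$ becomes positive only for $x$ close to $1$ (the paper evaluates at $x=0.9$ to get $B(X,Y)\approx 0.00679$, while $B(X,Y)\to 0$ as $x\to 1$). So you must look near the boundary, not at the center, and the candidate directions ``tangent and transverse to the diagonal'' at the origin will not exhibit the sign you want.
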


The original motivation of the study of $G_2$ is the robust control theory and it later has been studied intensively by the functional analysts (see for example \cite{AJYNJ01,AJYNJ04,AJLZYNJ18}).
The complex geometry of the symmetrized bidisc $G_2$ is also particularly interesting (see \cite{AJYNJ01},\cite{JP13},\cite{NN07} and \cite{PPZW05}). Note that $G_2$ serves as the first non-trivial example which is not biholomorphic to any geometric convex domains but still, the Carath\'eodory-Reiffen metric and the Kobayashi-Royden metric are the same (\cite{AJYNJ04},\cite{CC04}).

One important consequence of negatively pinched holomorphic sectional curvature  in \cite{WY17} is the equivalence of invariant metrics (see Section 3). The classical invariant metrics include the Bergman metric, the Carath\'eodory-Reiffen metric, the Kobayashi-Royden metric, and the complete K\"ahler-Einstein metric of Ricci curvature equal to $-1$. Note that
 invariant metrics on K\"ahler manifolds with the uniform squeezing property  are equivalent  (cf. \cite{KLXSSTY04, SKY09}).
In particular, the equivalence of those invariant metrics has been established for strictly pseudoconvex domains \cite{IG75}, geometric convex domains \cite{Lem81, KKLZ16, GC19}, pseudoconvex domains of finite type in $\mathbb{C}^2$ \cite{CDW89,FS96,MJ89}, and $\mathbb{C}$-convex domains \cite{NNAL17} (also some pseudoconvex domains, see \cite{GC20,GK21}). Equivalence of classical invariant metrics on $ G_2$ also follows from \cite{NNAL17, NNPPZW08, SKY09}.

\section{Curvature tensors of the Bergman metric}

$G_2=\{(z_1+z_2,z_1z_2) : z_1,z_2\in \mathbb{D} \}$ is defined as the image of the bidisc $\mathbb{D}^2$ under $\Phi$, where
\begin{equation*}
\Phi : \mathbb{D}\times \mathbb{D} \rightarrow G_2, (z_1,z_2) \mapsto (z_1+z_2,z_1z_2)=:(w_1,w_2).
\end{equation*}
The Bergman kernel $B_{G_2}(w, w)$ of $G_2$ was explicit (cf. \cite{EAZW05}, \cite{MGSRSZG13}) and here we describe it by using $B=\Phi^*B_{G_2}$, the pull-back of the Bergman kernel on $\mathbb{D}^2$, given by

\begin{equation}\label{eq:pullback-bergman-kernel}
B(z,z)=\frac{1}{2\pi^2}\frac{1}{(z_1-z_2)(\overline{z_1-z_2})}\left\{\frac{1}{(1-z_1\overline{z_1})^2(1-z_2\overline{z_2})^2}-\frac{1}{(1-z_1\overline{z_2})^2(1-z_2\overline{z_1})^2}  \right\}
\end{equation}
(cf. page 12 in \cite{LCSGKYY19}).

Now we recall the characterization of the automorphism group of  $G_2$ (cf. \cite{JP04}). 
\begin{prop}
Any automorphism $H$ of $G_2$ is in the form of 
$$H(\Phi(z_1, z_2)) = \Phi(h(z_1), h(z_2))$$
for  $h \in \text{Aut}(\mathbb{D})$,
where $z_1, z_2 \in \mathbb{D}$.
\end{prop}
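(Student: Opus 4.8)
The plan is to use that $\Phi$ realizes $G_2$ as the quotient $\mathbb{D}^2/\langle\tau\rangle$ of the bidisc by the coordinate swap $\tau(z_1,z_2)=(z_2,z_1)$, and to lift an arbitrary automorphism of $G_2$ to an automorphism of $\mathbb{D}^2$ that is compatible with $\tau$. The easy inclusion is immediate: for $h\in\text{Aut}(\mathbb{D})$ the map $\Phi\circ(h\times h)\colon\mathbb{D}^2\to G_2$ is $\tau$-invariant (since $h\times h$ commutes with $\tau$ and $\Phi\circ\tau=\Phi$), so it descends to a holomorphic map $H_h\colon G_2\to G_2$ with $H_h\circ\Phi=\Phi\circ(h\times h)$; as $H_{h^{-1}}$ is a two-sided inverse, $H_h\in\text{Aut}(G_2)$, and $h\mapsto H_h$ is an injective homomorphism. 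For the reverse inclusion I would set $\Delta=\{z_1=z_2\}\subset\mathbb{D}^2$ and let $\Sigma:=\Phi(\Delta)=\{(2\lambda,\lambda^2):\lambda\in\mathbb{D}\}=\{(w_1,w_2)\in G_2:w_1^2=4w_2\}$ be the royal variety, and record that $\Phi$ is proper with branch locus $\Delta=\Phi^{-1}(\Sigma)$ (indeed $\det d\Phi=z_1-z_2$), so that $\Phi\colon\mathbb{D}^2\setminus\Delta\to G_2\setminus\Sigma$ is an unbranched holomorphic covering of degree two, Galois with deck group $\langle\tau\rangle\cong\mathbb{Z}_2$.

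Now take $H\in\text{Aut}(G_2)$. The crucial point — which I expect to be the main obstacle — is that $H(\Sigma)=\Sigma$. Since $G_2$ is a smooth complex manifold, $\Sigma$ is not visible to the complex structure alone, so one must invoke an intrinsic, hence biholomorphism-invariant, description of the royal variety: for instance its characterization in terms of complex geodesics and the Lempert function, or via the explicit realization $G_2=\{(s,p)\in\mathbb{C}^2:|s-\bar sp|+|p|^2<1\}$, in which the locus $\Sigma=\{s^2=4p\}$ is singled out; I would cite \cite{JP04} (and \cite{AJYNJ04}) for this. Granting it, $H$ restricts to an automorphism of $G_2\setminus\Sigma$, and everything below is a covering-space argument.

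Next I would compute fundamental groups. In the linear coordinates $u=z_1-z_2$, $v=z_1+z_2$ the bidisc becomes a convex domain, $\Delta$ becomes $\{u=0\}$, and $\tau$ becomes $(u,v)\mapsto(-u,v)$; sliding $v$ linearly to $0$ shows $\mathbb{D}^2\setminus\Delta$ deformation retracts onto the circle $\{v=0,\,0<|u|<2\}$, so $\pi_1(\mathbb{D}^2\setminus\Delta)\cong\mathbb{Z}$, and quotienting by $u\mapsto-u$ (that is, $u\mapsto u^2$) gives $\pi_1(G_2\setminus\Sigma)\cong\mathbb{Z}$ with $\Phi_*$ equal to multiplication by $2$, of image $2\mathbb{Z}$. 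Since $H$ induces an automorphism of $\pi_1(G_2\setminus\Sigma)\cong\mathbb{Z}$, it preserves $2\mathbb{Z}$, whence $(H\circ\Phi)_*\pi_1(\mathbb{D}^2\setminus\Delta)=H_*(2\mathbb{Z})=2\mathbb{Z}=\Phi_*\pi_1(\mathbb{D}^2\setminus\Delta)$; the lifting criterion for covering maps then produces a holomorphic $\tilde H\colon\mathbb{D}^2\setminus\Delta\to\mathbb{D}^2\setminus\Delta$ with $\Phi\circ\tilde H=H\circ\Phi$. Since $\tilde H$ is bounded, Riemann's removable singularity theorem extends it holomorphically across $\Delta$ with $\Phi\circ\tilde H=H\circ\Phi$ still valid on all of $\mathbb{D}^2$; performing the same construction for $H^{-1}$ and composing shows $\tilde H$ has a holomorphic two-sided inverse, so $\tilde H\in\text{Aut}(\mathbb{D}^2)$.

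Finally I would exploit compatibility with $\tau$. From $\Phi\circ\tau=\Phi$ and $\Phi\circ\tilde H=H\circ\Phi$ one gets $\Phi\circ(\tilde H\tau\tilde H^{-1})=\Phi$, and any holomorphic self-map $\sigma=(\sigma_1,\sigma_2)$ of $\mathbb{D}^2$ with $\Phi\circ\sigma=\Phi$ satisfies $\{\sigma_1(z),\sigma_2(z)\}=\{z_1,z_2\}$ pointwise, hence equals $\mathrm{id}$ or $\tau$ by connectedness of $\mathbb{D}^2\setminus\Delta$; being conjugate to $\tau$, the map $\tilde H\tau\tilde H^{-1}$ must be $\tau$, so $\tilde H$ commutes with $\tau$. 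By the classical description $\text{Aut}(\mathbb{D}^2)=(\text{Aut}\,\mathbb{D})^2\rtimes\langle\tau\rangle$, commutation with $\tau$ forces the two M\"obius components of $\tilde H$ to agree, i.e. $\tilde H=g\times g$ or $(g\times g)\circ\tau$ for some $g\in\text{Aut}(\mathbb{D})$; in either case $\Phi\circ\tilde H=\Phi\circ(g\times g)$ by swap-invariance of $\Phi$, so $H\circ\Phi=H_g\circ\Phi$ and therefore $H=H_g$ on $G_2=\Phi(\mathbb{D}^2)$. Thus $\text{Aut}(G_2)=\{H_h:h\in\text{Aut}(\mathbb{D})\}$. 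The only genuinely nontrivial ingredient is the invariance of the royal variety; the rest is formal once that is granted.
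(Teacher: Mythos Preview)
Your argument is correct in all its essential steps, and you have correctly isolated the one nontrivial ingredient, namely the biholomorphic invariance of the royal variety $\Sigma=\{w_1^2=4w_2\}$; once that is granted, the covering-space lift, the Riemann extension across $\Delta$, and the identification of the $\tau$-centralizer in $\text{Aut}(\mathbb{D}^2)$ all go through as you describe. Two small remarks: the deformation retract ``slide $v$ to $0$'' is indeed legitimate because $|u\pm tv|=|(1\pm t)z_1-(1\mp t)z_2|<(1\pm t)+(1\mp t)=2$; and when you compose the lifts of $H$ and $H^{-1}$ you only get $\tilde G\tilde H\in\{\mathrm{id},\tau\}$ a priori, but either case already forces $\tilde H$ to be bijective, so the conclusion stands.

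As for the comparison: the paper does not prove this proposition at all---it simply records the statement with a reference to \cite{JP04}. What you have written is essentially the argument one finds there (and in related work of Agler--Young and Edigarian--Zwonek), so you have reconstructed the cited proof rather than diverged from it. The invariance of $\Sigma$ that you defer to the literature is usually obtained either via the uniqueness properties of complex geodesics through a point of $G_2$ or via the Shilov boundary characterization; either approach is acceptable to cite, as you do.
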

\begin{coro}\label{cor:invariance}
For any $(w_1, w_2)  \in G_2$, there exists $H \in Aut(G_2)$ such that $H(w_1, w_2)=(x, 0)$ 
for $x \in [0, 1)$.
\end{coro}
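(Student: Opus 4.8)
The plan is to read this off directly from the proposition together with the surjectivity of $\Phi$. First I would use that $G_2$ is by definition the image of $\Phi$, so I can write $(w_1,w_2)=\Phi(z_1,z_2)=(z_1+z_2,z_1z_2)$ for some $z_1,z_2\in\mathbb{D}$. Then I would pick the M\"obius automorphism
\[
h(\zeta)=\frac{\zeta-z_2}{1-\overline{z_2}\,\zeta}\in\mathrm{Aut}(\mathbb{D}),
\]
so that $h(z_2)=0$. Invoking the proposition (in the form that a map of the displayed type is a well-defined automorphism of $G_2$), the assignment $H_1(\Phi(\zeta_1,\zeta_2))=\Phi(h(\zeta_1),h(\zeta_2))$ defines $H_1\in\mathrm{Aut}(G_2)$, and
\[
H_1(w_1,w_2)=\Phi(h(z_1),h(z_2))=\Phi(h(z_1),0)=(h(z_1),0),
\]
so the point is now of the form $(a,0)$ with $a=h(z_1)\in\mathbb{D}$.

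Next I would rotate $a$ onto the nonnegative real axis: choose $\theta\in\mathbb{R}$ with $e^{i\theta}a=|a|$, set $r(\zeta)=e^{i\theta}\zeta\in\mathrm{Aut}(\mathbb{D})$, and let $H_2\in\mathrm{Aut}(G_2)$ be the induced automorphism $H_2(\Phi(\zeta_1,\zeta_2))=\Phi(r(\zeta_1),r(\zeta_2))$. Since $r$ fixes $0$,
\[
H_2(a,0)=\Phi(e^{i\theta}a,\,e^{i\theta}\cdot 0)=(e^{i\theta}a,0)=(|a|,0).
\]
Putting $H=H_2\circ H_1\in\mathrm{Aut}(G_2)$ and $x=|a|=|h(z_1)|$ gives $H(w_1,w_2)=(x,0)$, and $h(z_1)\in\mathbb{D}$ forces $x\in[0,1)$, which is exactly the claim. (By symmetry of $\Phi$ it is irrelevant which of $z_1,z_2$ we send to $0$; the value $x=0$ genuinely occurs, e.g.\ when $z_1=z_2$, consistent with $0\in[0,1)$.)

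I do not expect any real obstacle; the argument is a one–line consequence of the proposition. The only point that goes slightly beyond its literal statement is the converse direction used above—that for each $h\in\mathrm{Aut}(\mathbb{D})$ the formula $\Phi(z_1,z_2)\mapsto\Phi(h(z_1),h(z_2))$ truly is an automorphism of $G_2$. This is standard: $\Phi$ exhibits $G_2$ as the quotient of $\mathbb{D}^2$ by the coordinate–swap involution (two points of $\mathbb{D}^2$ have the same $\Phi$-image precisely when they differ by a swap, since the elementary symmetric functions determine an unordered pair of roots), the diagonal action $(\zeta_1,\zeta_2)\mapsto(h(\zeta_1),h(\zeta_2))$ commutes with this involution and hence descends to a biholomorphism of $G_2$ with inverse induced by $h^{-1}$. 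Alternatively, one may simply quote the full description of $\mathrm{Aut}(G_2)$ from the cited reference.
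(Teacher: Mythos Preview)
Your proof is correct and follows essentially the same approach as the paper: pick a disc automorphism sending one of the two preimage coordinates to $0$, then rotate to make the other coordinate nonnegative real. The paper sends $z_1\mapsto 0$ and rotates $h(z_2)$, whereas you send $z_2\mapsto 0$ and rotate $h(z_1)$; as you yourself observe, the symmetry of $\Phi$ makes this distinction irrelevant.
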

\begin{proof}
For any $z_1  \in \mathbb{D}$, there exists $h \in \text{Aut}(\mathbb{D})$ such that $h(z_1)=0$. For any $z_2 \in \mathbb{D}$, there exists $\theta\in [0, 2\pi)$ such that $e^{i\theta}h(z_2) = x \in [0, 1)$. 
Therefore, $ \Phi(e^{i\theta} h(z_1), e^{i\theta} h(z_2)) = (x, 0)$. This finishes the proof.
\end{proof}
Since the Bergman metric is invariant under automorphism, in order to estimate the Bergman metric and its covariant derivatives, it suffices to evaluate at $(x, 0) \in G_2$ or equivalently $(x, 0) \in  \mathbb{D} \times  \mathbb{D}$ for $x \in [0, 1)$.
We will use the coordinate $w_1=z_1+z_2,w_2=z_1 z_2$ on $G_2$ for vector fields $\frac{\partial}{\partial w_i},i=1,2$. Then the metric component of the pullback Bergman metric is given by 
\begin{equation}\label{eq:metric}
g_{i\overline{j}}=\frac{{\partial}^2 \log B_{G_2}(w,\overline{w}) }{\partial w_{i}\partial \overline{w_{j}}}={B^{-2}_{G_2}}(B_{G_2}{{\partial^2_{i\overline{j}}} B_{G_2}}-{{\partial_i}  B_{G_2}}{\partial _{\overline{j}}}{B_{G_2}}), i=1,2.
\end{equation}
We use the notation $\frac{\partial}{\partial w_{1}}=\partial_{1}$, $\frac{\partial}{\partial {\overline{w_1}}}=\partial_{\overline{1}}$, $\frac{\partial}{\partial w_2}=\partial_{2}$, $\frac{\partial}{\partial {\overline{w_2}}}=\partial_{\overline{2}}$.
To use the map $\Phi$ in computations, we convert from $\frac{\partial}{\partial z_i}$ to $\frac{\partial}{\partial w_j}$ by the inverse function theorem, and expressions of $\frac{\partial z_i}{\partial w_j}$ are given by 
\begin{align}\label{eq:conversion_matrix}
\frac{\partial z_1}{\partial w_1}&=\frac{z_1}{z_1-z_2}, 
\frac{\partial z_1}{\partial w_2}=\frac{-1}{z_1-z_2},
\frac{\partial z_2}{\partial w_1}=\frac{-z_2}{z_1-z_2},
\frac{\partial z_2}{\partial w_2}=\frac{1}{z_1-z_2},
\end{align}
where $z_1,z_2$ satisfy $w_1=z_1+z_2,w_2=z_1z_2$. Since we will use $d\Phi^{-1}=\left(\frac{\partial z_i}{\partial w_j}\right)_{i,j=1,2}$ for computations, we shall use the notation $\Phi^{-1}$ which makes sense only in the relation $B_{G_2}=B \circ  \Phi ^{-1}$ on that given point.

The following proposition follows from direct computations. 
\begin{prop}\label{derivatives_of_Bergman_kernel}
The derivatives of $B$ in \eqref{eq:pullback-bergman-kernel} at $(x,0) \in \mathbb{D}\times \mathbb{D}, 0\leq x<1$ (in a $(z_1,z_2)-coordinate$) are given by
\begin{align*}
\partial_{z_1} B&=\partial_{\overline{z}_1} B=\frac{x \left(x^2-3\right)}{2 \pi ^2 \left(x^2-1\right)^3}, \partial_{z_2} B=\partial_{\overline{z}_2} B=-\frac{x \left(2 x^2-3\right)}{2 \pi ^2 \left(x^2-1\right)^2},\\
\partial^2_{z_1\overline{z}_1}B&=\frac{-x^4+4 x^2+3}{2 \pi ^2 \left(x^2-1\right)^4}, \partial^2_{1z_1\overline{z}_2}B=\partial^2_{z_2\overline{z}_1}B=\frac{x^2-3}{2 \pi ^2 \left(x^2-1\right)^3},
\partial^2_{z_2\overline{z}_2}B=\frac{-4 x^4+4 x^2+3}{2 \pi ^2 \left(x^2-1\right)^2}, \\
\partial^2_{z_1{z_1}}B&=-\frac{x^2 \left(x^2-4\right)}{\pi ^2 \left(x^2-1\right)^4},  
\partial^2_{z_1{z_2}}B=\frac{x^2 \left(x^2-2\right)}{\pi ^2 \left(x^2-1\right)^3},
\partial^2_{z_2z_2}B=\frac{4 x^2-3 x^4}{\pi ^2 \left(x^2-1\right)^2},\\
\partial^3_{z_1\overline{z}_1z_1}B&=\frac{x \left(x^4-5 x^2-8\right)}{\pi ^2 \left(x^2-1\right)^5},
\partial^3_{z_1\overline{z}_1z_2}B=\partial^3_{z_1\overline{z}_2z_1}B=\partial^3_{z_2\overline{z}_1z_1}B=-\frac{x \left(x^2-4\right)}{\pi ^2 \left(x^2-1\right)^4},\\
\partial^3_{z_1\overline{z}_2z_2}B&=\partial^3_{z_2\overline{z}_2z_1}B=\frac{x \left(2 x^2-5\right)}{\pi ^2 \left(x^2-1\right)^3},\partial^3_{z_2\overline{z}_1z_2}B=-\frac{x \left(3 x^4-9 x^2+8\right)}{\pi ^2 \left(x^2-1\right)^3},
\partial^3_{z_2\overline{z}_2z_2}B=\frac{-6 x^5+5 x^3+4 x}{\pi ^2 \left(x^2-1\right)^2},\\
\partial^3_{z_1\overline{z}_1\overline{z}_1}B&=-\frac{-x^5+5 x^3+8 x}{\pi ^2 \left(x^2-1\right)^5},
\partial^3_{z_1\overline{z}_1\overline{z}_2}B=\partial^3_{z_1\overline{z}_2\overline{z}_1}B=\partial^3_{z_2\overline{z}_1\overline{z}_1}B={\frac{4 x-x^3}{\pi ^2 \left(x^2-1\right)^4}},\\
\partial^3_{z_1\overline{z}_2\overline{z}_2}B&=-\frac{x(3 x^4-9 x^2+8 )}{\pi ^2 \left(x^2-1\right)^3},\partial^3_{z_2\overline{z}_2\overline{z}_1}B=-\frac{5 x-2 x^3}{\pi ^2 \left(x^2-1\right)^3},\\
 \partial^3_{z_2\overline{z}_1\overline{z}_2}B&=-\frac{5 x-2 x^3}{\pi ^2 \left(x^2-1\right)^3},
\partial^3_{z_2\overline{z}_2\overline{z}_2}B=\frac{-6 x^5+5 x^3+4 x}{\pi ^2 \left(x^2-1\right)^2}.
\end{align*}
\begin{align*}
\partial^4_{z_1\overline{z}_1z_1\overline{z}_1}B&=\frac{-2 x^6+12 x^4+42 x^2+8}{\pi ^2 \left(x^2-1\right)^6}, \partial^4_{z_1\overline{z}_1z_1\overline{z}_2}B=\partial^4_{z_1\overline{z}_1z_2\overline{z}_1}B=\partial^4_{z_1\overline{z}_2z_1\overline{z}_1}B=\frac{2 \left(x^4-5 x^2-2\right)}{\pi ^2 \left(x^2-1\right)^5},  \\
\partial^4_{z_1\overline{z}_1z_2\overline{z}_2}B&=\partial^4_{z_1\overline{z}_2z_2\overline{z}_1}B=\partial^4_{z_2\overline{z}_2z_1\overline{z}_1}B=\frac{-2 x^4+6 x^2+5}{\pi ^2 \left(x^2-1\right)^4},  
\partial^4_{z_1\overline{z}_2z_1\overline{z}_1}B=-\frac{2 \left(x^2-4\right)}{\pi ^2 \left(x^2-1\right)^4},  \\
\partial^4_{z_2\overline{z}_2z_1\overline{z}_2}B&=\partial^4_{z_2\overline{z}_2z_2\overline{z}_1}B=\partial^4_{z_1\overline{z}_2z_2\overline{z}_2}B=-\frac{2 \left(3 x^6-9 x^4+7 x^2+2\right)}{\pi ^2
	\left(x^2-1\right)^3},  
\partial^4_{z_2\overline{z}_2z_2\overline{z}_2}B=\frac{2 \left(-9 x^6+6 x^4+5 x^2+4\right)}{\pi ^2
	\left(x^2-1\right)^2}.
\end{align*}
\end{prop}
\begin{rema}
	One can verify from computations that all formulas in Proposition~\ref{derivatives_of_Bergman_kernel} at $(x,0), 0\leq x<1 \in \mathbb{D}\times \mathbb{D}$ coincide at the value $(0,x), 0\leq x<1$. Hence we can use either $(x,0)$ or $(0,x)$ on $\mathbb{D}\times \mathbb{D}$ as the elements of the inverse image  of $\Phi$ at $(x,0)\in G_2$.
\end{rema}

\begin{prop}\label{prop:metric} The components of the Bergman metric $g_{i\overline{j}}$ at $(x,0), 0\leq x<1 \in G_2$ are given as follows:
\begin{align*}
g_{1\overline{1}}&=\frac{6-4 x^2}{\left(x^4-3 x^2+2\right)^2},\\
g_{1\overline{2}}&={{g_{2\overline{1}}=}}\frac{2 x \left(x^2-2\right)}{\left(x^2-1\right)^2},\\
g_{2\overline{2}}&=-\frac{2 \left(2 x^4-6 x^2+5\right)}{\left(x^2-2\right)
\left(x^2-1\right)^2}.
\end{align*}	
\begin{proof}
The first derivatives of $B\circ \Phi^{-1}$ are
\begin{equation*}
{\partial_i}  B_{G_2}=\frac{\partial}{\partial w_i}(B\circ \Phi^{-1})=\partial_{z_1} B \frac{\partial z_1}{\partial w_i}+\partial_{z_2} B \frac{\partial z_2}{\partial w_i} ,i=1,2,
\end{equation*}
and similar formulas hold for complex conjugate case. So with Proposition~\ref{derivatives_of_Bergman_kernel}, computations give that at $(x,0), 0\leq x<1$,
\begin{align*}
&{\partial_1}  B_{G_2}={\partial_{\overline{1}}}  B_{G_2}=\frac{x \left(x^2-3\right)}{2 \pi ^2 \left(x^2-1\right)^3},\\
&{\partial_2}  B_{G_2}={\partial_{\overline{2}}}  B_{G_2}=-\frac{x^2 \left(x^2-2\right)}{\pi ^2 \left(x^2-1\right)^3}.
\end{align*}
For second derivatives of $B\circ \Phi^{-1}$, since
\begin{equation*}
\frac{\partial}{\partial \overline{w}_j}\left((\partial_{z_i} B)\circ \Phi^{-1}\right)=\frac{\partial}{\partial \overline{z}_1}(\partial_{z_i} B)\frac{\partial  \overline{z}_1}{\partial \overline{w}_j}+\frac{\partial}{\partial \overline{z}_2}(\partial_{z_i} B)\frac{\partial \overline{z}_2}{\partial  \overline{w}_j},
\end{equation*}
we have 
\begin{align*}
&\partial^2_{i\overline{j}} B_{G_2}=\frac{\partial^2}{\partial w_i \partial \overline{w}_j}(B\circ \Phi^{-1})=\frac{\partial}{\partial \overline{w}_j}\left(\partial_{z_1} B \frac{\partial z_1}{\partial w_i} \right) + \frac{\partial}{\partial \overline{w}_j}\left(\partial_{z_2} B \frac{\partial z_2}{\partial w_i} \right) \\
&=\frac{\partial}{\partial \overline{w}_j}\left((\partial_{z_1} B)\circ \Phi^{-1}\right)\frac{\partial z_1}{\partial w_i}+ \frac{\partial}{\partial \overline{w}_j}\left((\partial_{z_2} B)\circ \Phi^{-1}\right)\frac{\partial z_2}{\partial w_i}+\partial_{z_1} B \frac{\partial^2 z_1}{\partial w_i \partial \overline{w}_j}+\partial_{z_2} B \frac{\partial^2 z_2}{\partial w_i \partial \overline{w}_j}\\
&=\partial^2_{z_1\overline{z}_1}B\frac{\partial \overline{z}_1}{\partial \overline{w}_j}\frac{\partial z_1}{\partial w_i}+\partial^2_{z_1\overline{z}_2}B\frac{\partial \overline{z}_2}{\partial \overline{w}_j}\frac{\partial z_1}{\partial w_i}+\partial^2_{z_2\overline{z}_1}B\frac{\partial \overline{z}_1}{\partial \overline{w}_j}\frac{\partial z_2}{\partial w_i}+\partial^2_{z_2\overline{z}_2}B\frac{\partial \overline{z}_2}{\partial \overline{w}_j}\frac{\partial z_2}{\partial w_i},
\end{align*}
because $ \frac{\partial^2 z_1}{\partial w_i \partial \overline{w}_j}= \frac{\partial^2 z_2}{\partial w_i \partial \overline{w}_j}=0$ where $i,j=1,2$.
Hence from computation with Proposition~\ref{derivatives_of_Bergman_kernel}, at $(x,0), 0\leq x<1$,
\begin{align*}
&\partial^2_{1\overline{1}} B_{G_2}=\frac{-x^4+4 x^2+3}{2 \pi ^2 \left(x^2-1\right)^4},\\
&\partial^2_{1\overline{2}} B_{G_2} {{=\partial^2_{2\overline{1}} B_{G_2}}}=\frac{x \left(x^2-4\right)}{\pi ^2 \left(x^2-1\right)^4},\\ 
&\partial^2_{2\overline{2}} B_{G_2}=\frac{-2 x^6+6 x^4-6 x^2+5}{\pi ^2 \left(x^2-1\right)^4}.
\end{align*}
Now proposition follows from computations with ~\eqref{eq:metric}.
\end{proof}
\end{prop}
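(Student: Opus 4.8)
The plan is to reduce the computation to the explicit pullback kernel $B$ of \eqref{eq:pullback-bergman-kernel} together with the identity \eqref{eq:metric} expressing $g_{i\overline j}$ through $B_{G_2}$ and its first and mixed second derivatives. Since the Bergman metric is $\mathrm{Aut}(G_2)$-invariant, Corollary~\ref{cor:invariance} lets me evaluate everything at the point $(x,0)\in G_2$, whose $\Phi$-preimage I take to be $(z_1,z_2)=(x,0)\in\mathbb{D}\times\mathbb{D}$. For $x\in(0,1)$ this preimage lies off the branch locus $\{z_1=z_2\}$ of $\Phi$ (where $\det d\Phi=z_1-z_2$ vanishes), so the inverse function theorem applies and the conversion factors \eqref{eq:conversion_matrix} specialize, using $z_1-z_2=x$, to $\partial z_1/\partial w_1=1$, $\partial z_1/\partial w_2=-1/x$, $\partial z_2/\partial w_1=0$, $\partial z_2/\partial w_2=1/x$. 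The remaining value $x=0$ is then handled by continuity, once the claimed formulas — which are rational functions of $x$, regular at $0$ — have been established on $(0,1)$.

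Next I would assemble all ingredients at $(x,0)$. Evaluating \eqref{eq:pullback-bergman-kernel} directly and cancelling the removable $(z_1-z_2)^{-2}$ factor gives $B_{G_2}=\frac{2-x^2}{2\pi^2(x^2-1)^2}$. For the derivatives of $B_{G_2}=B\circ\Phi^{-1}$ I would apply the chain rule: $\partial_i B_{G_2}=\sum_k(\partial_k B)\,\partial z_k/\partial w_i$, and for the mixed second derivatives $\partial^2_{i\overline j}B_{G_2}=\sum_{k,l}(\partial^2_{k\overline l}B)\,(\partial z_k/\partial w_i)\overline{(\partial z_l/\partial w_j)}$ — the would-be terms $\partial_k B\cdot\partial^2 z_k/\partial w_i\partial\overline w_j$ drop out because the $z_k$ are holomorphic in $w$. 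Substituting the values from Proposition~\ref{derivatives_of_Bergman_kernel} together with the conversion factors above yields $\partial_i B_{G_2}$, $\partial^2_{1\overline1}B_{G_2}$, $\partial^2_{1\overline2}B_{G_2}=\partial^2_{2\overline1}B_{G_2}$ and $\partial^2_{2\overline2}B_{G_2}$ as explicit rational functions of $x$; note that because $B_{G_2}$ is real and $x$ is real, all these quantities are real at $(x,0)$, so $\partial_{\overline j}B_{G_2}=\partial_j B_{G_2}$ there and the conjugations cause no trouble.

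Finally I would substitute into \eqref{eq:metric}, i.e. $g_{i\overline j}=B_{G_2}^{-2}(B_{G_2}\,\partial^2_{i\overline j}B_{G_2}-\partial_i B_{G_2}\,\partial_{\overline j}B_{G_2})$, and simplify, clearing the common powers of $\pi$ and of $(x^2-1)$. For $g_{1\overline1}$ the numerator collapses to $6-4x^2$ and the denominator to $((x^2-1)(x^2-2))^2=(x^4-3x^2+2)^2$; for $g_{1\overline2}=g_{2\overline1}$ a factor $(x^2-2)^3$ in the numerator cancels against $(x^2-2)^2$ in the denominator to leave $2x(x^2-2)/(x^2-1)^2$; and $g_{2\overline2}$ reduces after recognizing the polynomial identity $-2x^6+10x^4-17x^2+10=-(2x^4-6x^2+5)(x^2-2)$, one factor of $(x^2-2)$ again cancelling from the denominator. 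The only genuine work is the bookkeeping in these polynomial cancellations, the $g_{2\overline2}$ case being the messiest; each cancellation is verified by expanding both sides (or by a short symbolic check), and there is no conceptual obstacle.
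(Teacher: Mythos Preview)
Your proof is correct and follows essentially the same route as the paper: chain rule through $\Phi^{-1}$ using the conversion factors \eqref{eq:conversion_matrix}, the vanishing of $\partial^2 z_k/\partial w_i\partial\overline w_j$ by holomorphicity, and substitution into \eqref{eq:metric}. You add a couple of details the paper leaves implicit---the explicit values of $\partial z_k/\partial w_i$ at $(x,0)$, the continuity argument covering $x=0$, and the factorization $-2x^6+10x^4-17x^2+10=-(2x^4-6x^2+5)(x^2-2)$---but the argument is otherwise identical.
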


\begin{rema}
	The Bergman metric was also calculated in \cite{MR3312605} in the $(z_1,z_2)$ coordinate with the different method.
\end{rema}

\begin{prop}\label{prop:inverse-metric} The components of inverse metric of the Bergman metric $g^{i\overline{j}}$ at $(x,0)\in G_2, 0\leq x< 1$ are given as follows:
\begin{align*}
g^{1\overline{1}}&=\frac{\left(x^2-2\right)^2 \left(2 x^4-6 x^2+5\right)}{2 \left(x^8-8 x^6+23
	x^4-30 x^2+15\right)},\\
g^{1\overline{2}}&=g^{2\overline{1}}=\frac{x \left(x^2-2\right)^4}{2 \left(x^8-8 x^6+23 x^4-30
	x^2+15\right)},\\
g^{2\overline{2}}&=\frac{2 x^4-7 x^2+6}{2 x^8-16 x^6+46 x^4-60 x^2+30}.
\end{align*}	
\begin{proof}
All formulas of $g^{i\overline{j}}_{B}$ at $(x,0), 0\leq x< 1$ follow from direct computations with Proposition~\ref{prop:metric}. For the record, the determinant of $g_{i\overline{j}}$ is precisely given by
\begin{align*}
\deg(g)=-\frac{4 \left(x^8-8 x^6+23 x^4-30 x^2+15\right)}{\left(x^2-2\right)^3
	\left(x^2-1\right)^2}.
\end{align*}

\end{proof}
\end{prop}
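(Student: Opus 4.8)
The statement is a purely algebraic consequence of Proposition~\ref{prop:metric}: the matrix $(g^{i\overline j})$ is the inverse of the $2\times 2$ Hermitian matrix $g=(g_{i\overline j})_{i,j=1,2}$ whose three entries were computed there. Since at the point $(x,0)$ we have $g_{1\overline 2}=g_{2\overline 1}=\frac{2x(x^2-2)}{(x^2-1)^2}\in\mathbb{R}$, Cramer's rule reads
\begin{equation*}
(g^{i\overline j})=\frac{1}{\det g}\begin{pmatrix} g_{2\overline 2} & -g_{1\overline 2}\\ -g_{2\overline 1} & g_{1\overline 1}\end{pmatrix},\qquad \det g=g_{1\overline 1}g_{2\overline 2}-g_{1\overline 2}g_{2\overline 1}.
\end{equation*}

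So the plan has two steps. First, compute $\det g$: substituting the three rational functions of Proposition~\ref{prop:metric}, using the factorization $x^4-3x^2+2=(x^2-1)(x^2-2)$ to put everything over the common denominator $(x^2-1)^4(x^2-2)^3$, and collecting the numerator, one finds that a factor $(x^2-1)^2$ cancels and
\begin{equation*}
\det g=-\frac{4\left(x^8-8x^6+23x^4-30x^2+15\right)}{(x^2-2)^3(x^2-1)^2}.
\end{equation*}
As a consistency check, for $0\le x<1$ the denominator $(x^2-2)^3(x^2-1)^2$ is negative while the octic numerator is positive (it equals $15$ at $x=0$ and has no root on $[0,1)$), so $\det g>0$, which agrees with the positive-definiteness of the Bergman metric. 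Second, divide each cofactor by $\det g$; after cancelling the factor $(x^2-1)^2$ and the appropriate power of $(x^2-2)$ coming from the denominators in Proposition~\ref{prop:metric}, the three quotients $g^{1\overline 1}=g_{2\overline 2}/\det g$, $g^{1\overline 2}=g^{2\overline 1}=-g_{1\overline 2}/\det g$ and $g^{2\overline 2}=g_{1\overline 1}/\det g$ simplify exactly to the stated formulas.

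There is no conceptual obstacle here; the entire argument is polynomial bookkeeping. The one place where care is genuinely needed is keeping track of the powers of $(x^2-1)$ and $(x^2-2)$ in the denominators so that the cancellations in $\det g$ and in each $g^{i\overline j}$ are carried out correctly. This is most safely done with a computer algebra system, and the output can be verified independently by checking $\sum_{k}g^{i\overline k}g_{k\overline j}=\delta_{ij}$ directly.
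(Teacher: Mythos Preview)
Your proposal is correct and follows essentially the same approach as the paper: invert the $2\times 2$ Hermitian matrix from Proposition~\ref{prop:metric} via Cramer's rule, recording the determinant along the way. The paper's proof is even terser, saying only that the formulas follow ``from direct computations'' and displaying the determinant; your version spells out the cofactor formula and adds the positivity sanity check, but the substance is identical.
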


Recall that the Christoffel symbols $\Gamma^{k}_{i j}$ of a K\"ahler metric $g=(g_{i\overline{j}})$ is written in local coordinates by
\begin{equation}\label{eq:Christoffel}
\Gamma^{k}_{i j}=g^{k\overline{l}}\partial_i g_{j\overline{l}}.
\end{equation}
On $G_2$, we have the following formulas of all $\Gamma^{k}_{i j}$: 
\begin{prop}\label{prop:christoffel} The Christoffel symbols $\Gamma^{k}_{i j}$ of the Bergman metric $g_{i\overline{j}}$ at $(x,0)\in G_2, 0\leq x< 1$ are given as follows:
	\begin{align*}
	\Gamma^{1}_{1 1}&=\frac{2 x \left(x^6-2 x^4-x^2+3\right)}{\left(x^2-2\right) \left(x^2-1\right)
		\left(x^8-8 x^6+23 x^4-30 x^2+15\right)},\\
	\Gamma^{2}_{1 1}&=\frac{6 \left(x^2-2\right)}{x^8-8 x^6+23 x^4-30 x^2+15},\\
	\Gamma^{1}_{2 1}& {=\Gamma^{1}_{1 2}} =\frac{2 x^2 \left(x^2-2\right)^2}{\left(x^2-1\right) \left(x^8-8 x^6+23 x^4-30
		x^2+15\right)},\\
	\Gamma^{1}_{2 2}&=\frac{2 x^3 \left(x^2-2\right)^3}{\left(x^2-1\right) \left(x^8-8 x^6+23 x^4-30
		x^2+15\right)},\\
	\Gamma^{2}_{2 1}& {=\Gamma^{2}_{1 2}}=-\frac{x \left(x^8-10 x^6+37 x^4-62 x^2+39\right)}{\left(x^2-2\right)
		\left(x^8-8 x^6+23 x^4-30 x^2+15\right)},\\
	\Gamma^{2}_{2 2}&=\frac{2 x^2 \left(x^2-3\right) \left(x^2-2\right)^2}{x^8-8 x^6+23 x^4-30
		x^2+15}.							
	\end{align*}	
\begin{proof}
From \eqref{eq:metric}, 
\begin{align}\label{eq:first_derivative_berg_metric}
&\frac{\partial}{\partial w_i}g_{j\overline{l}}=\partial_i g_{j\overline{l}}=-2{B^{-3}_{G_2}}\partial_iB_{G_2} (B_{G_2}{{\partial^2_{j\overline{l}}} B_{G_2}}-{{\partial_j}  B_{G_2}}{\partial _{\overline{l}}}{B_{G_2}})\nonumber \\
&+{B^{-2}_{G_2}}\left(\partial_i B_{G_2} \partial^2_{j\overline{l}}B_{G_2}+B_{G_2}\partial^3_{j\overline{l}i}B_{G_2}-\partial^2_{ji}B_{G_2}\partial_{\overline{l}}B_{G_2}-\partial_j B_{G_2} \partial^2_{\overline{l}i}B_{G_2} \right).
\end{align}
Since the formulas of $\partial_{j} B_{G_2}$ are given in the proof of  Proposition~\ref{prop:metric}, we should compute $\partial^2_{j{l}} B_{G_2}$ and $\partial^3_{j\overline{l}i}B_{G_2}$ to get all formulas of Christoffel symbols. 
Elementary calculus computations with a chain-rule give for any indices $i,j,k$,
\begin{align*}
&\partial^2_{i{j}} B_{G_2}=\frac{\partial^2}{\partial w_i \partial {w}_j}(B\circ \Phi^{-1})\\
&=\partial^2_{z_1{z_1}}B\frac{\partial {z}_1}{\partial {w}_j}\frac{\partial z_1}{\partial w_i}+\partial^2_{z_1{z_2}}B\frac{\partial {z}_2}{\partial {w}_j}\frac{\partial z_1}{\partial w_i}+\partial^2_{z_2{z_1}}B\frac{\partial {z}_1}{\partial {w}_j}\frac{\partial z_2}{\partial w_i}+\partial^2_{z_2{z_2}}B\frac{\partial {z}_2}{\partial {w}_j}\frac{\partial z_2}{\partial w_i}+\partial_{z_1} B \frac{\partial^2 z_1}{\partial w_i \partial w_j}+\partial_{z_2} B \frac{\partial^2 z_2}{\partial w_i \partial w_j},\\
&\partial^3_{i\overline{j}k}B_{G_2}=\frac{\partial}{\partial w_k}\frac{\partial^2}{\partial w_i \partial \overline{w}_j}(B\circ \Phi^{-1})=\\
&\left((\partial^3_{z_1\overline{z}_1z_1}B)\frac{\partial z_1}{\partial w_k}+(\partial^3_{z_1\overline{z}_1 z_2}B)\frac{\partial z_2}{\partial w_k} \right)\frac{\partial \overline{z}_1}{\partial \overline{w}_j}\frac{\partial {z}_1}{\partial {w}_i}+\left((\partial^3_{z_1\overline{z}_2 z_1}B)\frac{\partial z_1}{\partial w_k}+(\partial^3_{z_1\overline{z}_2 z_2}B)\frac{\partial z_2}{\partial w_k} \right)\frac{\partial \overline{z}_2}{\partial \overline{w}_j}\frac{\partial {z}_1}{\partial {w}_i}\\
&+\left((\partial^3_{z_2\overline{z}_1 z_1}B)\frac{\partial z_1}{\partial w_k}+(\partial^3_{z_2\overline{z}_1 z_2}B)\frac{\partial z_2}{\partial w_k} \right)\frac{\partial \overline{z}_1}{\partial \overline{w}_j}\frac{\partial {z}_2}{\partial {w}_i}+\left((\partial^3_{z_2\overline{z}_2 z_1}B)\frac{\partial z_1}{\partial w_k}+(\partial^3_{z_2\overline{z}_2 z_2}B)\frac{\partial z_2}{\partial w_k} \right)\frac{\partial \overline{z}_2}{\partial \overline{w}_j}\frac{\partial {z}_2}{\partial {w}_i}\\
&+\partial^2_{z_1\overline{z}_1}B \frac{\partial \overline{z}_1}{\partial \overline{w}_j}\frac{\partial^2 z_1}{\partial w_i \partial w_k}+\partial^2_{z_1\overline{z}_2}B \frac{\partial \overline{z}_2}{\partial \overline{w}_j}\frac{\partial^2 z_1}{\partial w_i \partial w_k}+\partial^2_{z_2\overline{z}_1}B \frac{\partial \overline{z}_1}{\partial \overline{w}_j}\frac{\partial^2 z_2}{\partial w_i \partial w_k}+\partial^2_{z_2\overline{z}_2}B \frac{\partial \overline{z}_2}{\partial \overline{w}_j}\frac{\partial^2 z_2}{\partial w_i \partial w_k}.
\end{align*}
From above, it suffices to determine all formulas of $\frac{\partial^2 z_j}{\partial w_i \partial w_j}$. With \eqref{eq:conversion_matrix} at $(x,0)$, 
\begin{align*}
\frac{\partial^2 z_1}{\partial w_1 \partial w_1}&=0,\frac{\partial^2 z_1}{\partial w_1 \partial w_2}=\frac{1}{x^2}, \frac{\partial^2 z_1}{\partial w_2 \partial w_2}=\frac{-2}{x^3}, \\
\frac{\partial^2 z_2}{\partial w_1 \partial w_1}&=0,\frac{\partial^2 z_2}{\partial w_1 \partial w_2}=-\frac{1}{x^2}, \frac{\partial^2 z_2}{\partial w_2 \partial w_2}=\frac{2}{x^3}.
\end{align*}

Now each formula $\Gamma^{i}_{j k}$ follows from computations with putting all necessary terms in \eqref{eq:Christoffel}.
\end{proof}

\end{prop}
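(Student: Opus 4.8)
The plan is to evaluate each $\Gamma^k_{ij}$ directly from \eqref{eq:Christoffel}, namely $\Gamma^k_{ij}=g^{k\overline l}\,\partial_i g_{j\overline l}$. The inverse-metric components $g^{k\overline l}$ at $(x,0)$ are already in hand from Proposition~\ref{prop:inverse-metric}, so the only missing ingredient is the first derivative $\partial_i g_{j\overline l}$ of the metric at $(x,0)$. Differentiating \eqref{eq:metric} gives the explicit expression \eqref{eq:first_derivative_berg_metric}, which requires, at $(x,0)\in G_2$: the value $B_{G_2}$, its first derivatives $\partial_i B_{G_2}$, its mixed second derivatives $\partial^2_{j\overline l}B_{G_2}$, its holomorphic second derivatives $\partial^2_{ij}B_{G_2}$, and its mixed third derivatives $\partial^3_{j\overline l i}B_{G_2}$. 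The first three of these families were already produced in the proof of Proposition~\ref{prop:metric}, so the real work is to compute the holomorphic second derivatives $\partial^2_{ij}B_{G_2}$ and the mixed third derivatives $\partial^3_{i\overline j k}B_{G_2}$.

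For those I would expand $B_{G_2}=B\circ\Phi^{-1}$ by the chain rule in terms of the bidisc-kernel derivatives listed in Proposition~\ref{derivatives_of_Bergman_kernel} and the derivative tensors of $\Phi^{-1}$. The key structural point is that $\Phi^{-1}$ is holomorphic, so all mixed second derivatives $\partial^2 z_i/\partial w_j\partial\overline w_k$ vanish (this is precisely what made the formula for $\partial^2_{i\overline j}B_{G_2}$ bilinear in $d\Phi^{-1}$ in the proof of Proposition~\ref{prop:metric}); by contrast, the \emph{holomorphic} second derivatives $\partial^2 z_i/\partial w_j\partial w_k$ do \emph{not} vanish and enter both $\partial^2_{ij}B_{G_2}$ and $\partial^3_{i\overline j k}B_{G_2}$. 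These are obtained by differentiating \eqref{eq:conversion_matrix}: at $(x,0)$ one finds $\partial^2 z_i/\partial w_1^2=0$, $\partial^2 z_1/\partial w_1\partial w_2=-\partial^2 z_2/\partial w_1\partial w_2=x^{-2}$, and $\partial^2 z_1/\partial w_2^2=-\partial^2 z_2/\partial w_2^2=-2x^{-3}$. Feeding these, together with $d\Phi^{-1}$ from \eqref{eq:conversion_matrix} and the derivatives of $B$ from Proposition~\ref{derivatives_of_Bergman_kernel}, into the chain-rule expansions for $\partial^2_{ij}B_{G_2}$ and $\partial^3_{i\overline j k}B_{G_2}$ yields explicit rational functions of $x$.

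With every derivative of $B_{G_2}$ at $(x,0)$ known, I would substitute into \eqref{eq:first_derivative_berg_metric} to obtain the $\partial_i g_{j\overline l}$ (the Kähler symmetries, together with the fact that all relevant quantities are real along $(x,0)$, reduce the number of independent entries), and then contract with the $g^{k\overline l}$ of Proposition~\ref{prop:inverse-metric}; the symmetry $\Gamma^k_{ij}=\Gamma^k_{ji}$ cuts the bookkeeping further and provides a consistency check. One caveat worth flagging in the write-up: the tensors $\partial^2 z_i/\partial w_j\partial w_k$, and hence several of the derivatives of $B_{G_2}$, are singular as $x\to 0$ (the $x^{-2}$ and $x^{-3}$ terms), yet the Bergman metric is smooth on all of $G_2$ and the final rational expressions for $\Gamma^k_{ij}$ are regular at $x=0$ (indeed each carries an overall factor of $x$); so the computation is carried out for $x\in(0,1)$ and the formulas extend by continuity (they may equally be checked at the preimage $(0,x)$ using the Remark after Proposition~\ref{derivatives_of_Bergman_kernel}). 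The main obstacle is not conceptual but the sheer volume of algebra in the third-derivative chain rule and the subsequent rational-function simplification; this is best handled by computing the first- and second-order derivative tensors of $\Phi^{-1}$ once, substituting, and simplifying with computer algebra.
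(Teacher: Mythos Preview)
Your proposal is correct and follows essentially the same route as the paper: differentiate \eqref{eq:metric} to obtain \eqref{eq:first_derivative_berg_metric}, expand the needed derivatives $\partial^2_{ij}B_{G_2}$ and $\partial^3_{i\overline j k}B_{G_2}$ via the chain rule using Proposition~\ref{derivatives_of_Bergman_kernel} together with the first- and second-order derivatives of $\Phi^{-1}$ from \eqref{eq:conversion_matrix}, and then contract with the inverse metric of Proposition~\ref{prop:inverse-metric}. Your remark about the apparent $x\to 0$ singularities in the intermediate tensors and their cancellation in the final $\Gamma^k_{ij}$ is a welcome addition that the paper does not make explicit.
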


\begin{prop}\label{prop:curv_components} The curvature components of the Bergman metric at $(x,0)\in G_2, 0\leq x< 1$ are given by
	\begin{align*}
R_{1\overline{1}1\overline{1}}&=\frac{4 \left(9 x^{16}-108 x^{14}+551 x^{12}-1552 x^{10}+2605 x^8-2598
	x^6+1410 x^4-300 x^2-18\right)}{\left(x^4-3 x^2+2\right)^4 \left(x^8-8 x^6+23
	x^4-30 x^2+15\right)} ,\\
R_{2\overline{2}1\overline{1}}&=R_{2\overline{1}1\overline{2}}=R_{1\overline{2}2\overline{1}}=R_{1\overline{1}2\overline{2}}\\
=&\frac{4 \left(x^{16}-12 x^{14}+68 x^{12}-248 x^{10}+627 x^8-1074 x^6+1170
	x^4-726 x^2+195\right)}{\left(x^2-2\right)^3 \left(x^2-1\right)^4 \left(x^8-8
	x^6+23 x^4-30 x^2+15\right)},\\
R_{1\overline{2}1\overline{2}}&=R_{2\overline{1}2\overline{1}}\\
=&-\frac{4 x^2 \left(x^{12}-12 x^{10}+59 x^8-160 x^6+245 x^4-198
	x^2+66\right)}{\left(x^2-1\right)^4 \left(x^8-8 x^6+23 x^4-30
	x^2+15\right)},\\
R_{2\overline{1}1\overline{1}}&=R_{1\overline{2}1\overline{1}}=R_{1\overline{1}2\overline{1}}=R_{1\overline{1}1\overline{2}}\\
=&\frac{4 x \left(2 x^{10}-19 x^8+76 x^6-147 x^4+138
	x^2-51\right)}{\left(x^2-2\right) \left(x^2-1\right)^4 \left(x^8-8 x^6+23 x^4-30
	x^2+15\right)},\\
R_{1\overline{2}2\overline{2}}&=R_{2\overline{1}2\overline{2}}=R_{2\overline{2}1\overline{2}}=R_{2\overline{2}2\overline{1}}\\
=&\frac{4 x \left(x^{12}-10 x^{10}+47 x^8-130 x^6+207 x^4-174
	x^2+60\right)}{\left(x^2-1\right)^4 \left(x^8-8 x^6+23 x^4-30
	x^2+15\right)},\\
R_{2\overline{2}2\overline{2}}&=\frac{4 \left(7 x^{16}-84 x^{14}+423 x^{12}-1156 x^{10}+1829 x^8-1614 x^6+624
	x^4+60 x^2-90\right)}{\left(x^2-2\right)^2 \left(x^2-1\right)^4 \left(x^8-8 x^6+23
	x^4-30 x^2+15\right)}.
\end{align*}	
\begin{proof}
We will compute the components of curvature tensor $R=R_{a\overline{b}c\overline{d}}dz^{a}\otimes d\overline{z}^{b}\otimes dz^{c}\otimes d\overline{z}^{d}$ associated with given Hermitian metric $g$ by well-known formula:
\begin{equation}\label{eq:curv}
R_{a\overline{b}c\overline{d}}=-\frac{\partial^2 g_{a\overline{b}}}{\partial z_c \partial \overline{z}_d}+\sum_{p,q=1}^{l}g^{q\overline{p}}\frac{\partial g_{a\overline{p}}}{\partial z_c}\frac{\partial g_{q\overline{b}}}{\partial \overline{z}_d}.
\end{equation}
For the Bergman metric $g_{i\overline{j}}$ on $G_2$, we already obtained $\frac{\partial}{\partial w_i}g_{j\overline{l}}=\partial_i g_{j\overline{l}}$ in \eqref{eq:first_derivative_berg_metric}. Also, the inverse metrix was obtained in Proposition~\ref{prop:inverse-metric}. From \eqref{eq:first_derivative_berg_metric}, $\frac{\partial^2 g_{a\overline{b}}}{\partial z_c \partial \overline{z}_d}$ is written in terms of the Bergman kernel $B_{G_2}$ as follows:
\begin{align*}
\partial^2_{i\overline{j}}g_{k\overline{l}}&=6{B^{-4}_{G_2}}\partial_{\overline{j}}{B_{G_2}}\partial_{i}{B_{G_2}}{B_{G_2}}\partial^2_{k\overline{l}}{B_{G_2}}-2{B^{-3}_{G_2}}\partial^2_{i\overline{j}}{B_{G_2}}{B_{G_2}}\partial^2_{k\overline{l}}{B_{G_2}}-{{4}}{B^{-3}_{G_2}}\partial_{i}{B_{G_2}}\partial_{\overline{j}}{B_{G_2}} \partial^2_{k\overline{l}}{B_{G_2}}\\
&-2{B^{-3}_{G_2}}\partial_i {B_{G_2}} {B_{G_2}} \partial^3_{k\overline{l}\overline{j}}{B_{G_2}}-6{B^{-4}_{G_2}}\partial_{\overline{j}}{B_{G_2}}\partial_{i}{B_{G_2}}\partial_{k}{B_{G_2}}\partial_{\overline{l}}{B_{G_2}}+2{B^{-3}_{G_2}}\partial^2_{i\overline{j}}{B_{G_2}}\partial_{k}{B_{G_2}} \partial_{\overline{l}}{B_{G_2}}\\
&+2{B^{-3}_{G_2}}\partial_{i}{B_{G_2}} \partial^2_{k\overline{j}}{B_{G_2}} \partial_{\overline{l}}{B_{G_2}}+2{B^{-3}_{G_2}}\partial_{i}{B_{G_2}} \partial_{k}{B_{G_2}}\partial^2_{\overline{l}\overline{j}}{B_{G_2}}+{B^{-2}_{G_2}}\partial^2_{i\overline{j}}{B_{G_2}}\partial^2_{k\overline{l}}{B_{G_2}}\\
&+{B^{-2}_{G_2}}\partial_{i}{B_{G_2}} \partial^3_{k\overline{l}\overline{j}}{B_{G_2}}-{{B^{-2}_{G_2}}\partial_{\overline{j}}{B_{G_2}}\partial^3_{k\overline{l}i}{B_{G_2}}}+{B^{-1}_{G_2}}\partial^4_{i\overline{j}k\overline{l}}B_{G_2}\\
&+2{B^{-3}_{G_2}}\partial_{\overline{j}}{B_{G_2}}\partial^2_{ki}{B_{G_2}}\partial_{\overline{l}}{B_{G_2}}-{B^{-2}_{G_2}}\partial^3_{ki\overline{j}}{B_{G_2}}\partial_{\overline{l}}{B_{G_2}}-{B^{-2}_{G_2}}\partial^2_{ki}{B_{G_2}}\partial^2_{\overline{l}\overline{j}}{B_{G_2}}\\
&+2{B^{-3}_{G_2}}\partial_{\overline{j}}{B_{G_2}}\partial_{k}{B_{G_2}}\partial^2_{\overline{l}i}{B_{G_2}}-{B^{-2}_{G_2}}\partial^2_{k\overline{j}}{B_{G_2}}\partial^2_{\overline{l}i}{B_{G_2}}-{B^{-2}_{G_2}}\partial_k{B_{G_2}}\partial^3_{\overline{l}i\overline{j}}{B_{G_2}}.
\end{align*}
With all formulas in the proof of Proposition~\ref{prop:christoffel}, the only missing term is $\partial^4_{k\overline{l}i\overline{j}}{B_{G_2}}$, which is written as 
\begin{align*}
&\partial^4_{i\overline{j}k\overline{l}}{B_{G_2}}=\frac{\partial}{\partial \overline{w}_l}\frac{\partial^3}{\partial w_i \partial \overline{w}_j \partial w_k}(B\circ \Phi^{-1})=\\
&\left((\partial^4_{z_1\overline{z}_1 z_1\overline{z}_1}B)\frac{\partial z_1}{\partial w_k}\frac{\partial \overline{z}_1}{\partial \overline{w}_l}+(\partial^4_{z_1\overline{z}_1 z_1\overline{z}_2}B)\frac{\partial z_1}{\partial w_k}\frac{\partial \overline{z}_2}{\partial \overline{w}_l}+(\partial^4_{z_1\overline{z}_1 z_2\overline{z}_1}B)\frac{\partial z_2}{\partial w_k}\frac{\partial \overline{z}_1}{\partial \overline{w}_l}+(\partial^4_{z_1\overline{z}_1 z_2\overline{z}_2}B)\frac{\partial z_2}{\partial w_k}\frac{\partial \overline{z}_2}{\partial \overline{w}_l} \right)\frac{\partial \overline{z}_1}{\partial \overline{w}_j}\frac{\partial {z}_1}{\partial {w}_i}\\
&+\left((\partial^4_{z_1\overline{z}_2 z_1\overline{z}_1}B)\frac{\partial z_1}{\partial w_k}\frac{\partial \overline{z}_1}{\partial \overline{w}_l}+(\partial^4_{z_1\overline{z}_2 z_1\overline{z}_2}B)\frac{\partial z_1}{\partial w_k}\frac{\partial \overline{z}_2}{\partial \overline{w}_l}+(\partial^4_{z_1\overline{z}_2 z_2\overline{z}_1}B)\frac{\partial z_2}{\partial w_k}\frac{\partial \overline{z}_1}{\partial \overline{w}_l}+(\partial^4_{z_1\overline{z}_2 z_2\overline{z}_2}B)\frac{\partial z_2}{\partial w_k}\frac{\partial \overline{z}_2}{\partial \overline{w}_l} \right)\frac{\partial \overline{z}_2}{\partial \overline{w}_j}\frac{\partial {z}_1}{\partial {w}_i}\\
&+\left((\partial^4_{z_2\overline{z}_1 z_1\overline{z}_1}B)\frac{\partial z_1}{\partial w_k}\frac{\partial \overline{z}_1}{\partial \overline{w}_l}+(\partial^4_{z_2\overline{z}_1 z_1\overline{z}_2}B)\frac{\partial z_1}{\partial w_k}\frac{\partial \overline{z}_2}{\partial \overline{w}_l}+(\partial^4_{z_2\overline{z}_1 z_2\overline{z}_1}B)\frac{\partial z_2}{\partial w_k}\frac{\partial \overline{z}_1}{\partial \overline{w}_l}+(\partial^4_{z_2\overline{z}_1 z_2\overline{z}_2}B)\frac{\partial z_2}{\partial w_k}\frac{\partial \overline{z}_2}{\partial \overline{w}_l} \right)\frac{\partial \overline{z}_1}{\partial \overline{w}_j}\frac{\partial {z}_2}{\partial {w}_i}\\
&+\left((\partial^4_{z_2\overline{z}_2 z_1\overline{z}_1}B)\frac{\partial z_1}{\partial w_k}\frac{\partial \overline{z}_1}{\partial \overline{w}_l}+(\partial^4_{z_2\overline{z}_2 z_1\overline{z}_2}B)\frac{\partial z_1}{\partial w_k}\frac{\partial \overline{z}_2}{\partial \overline{w}_l}+(\partial^4_{z_2\overline{z}_2 z_2\overline{z}_1}B)\frac{\partial z_2}{\partial w_k}\frac{\partial \overline{z}_1}{\partial \overline{w}_l}+(\partial^4_{z_2\overline{z}_2 z_2\overline{z}_2}B)\frac{\partial z_2}{\partial w_k}\frac{\partial \overline{z}_2}{\partial \overline{w}_l} \right)\frac{\partial \overline{z}_2}{\partial \overline{w}_j}\frac{\partial {z}_2}{\partial {w}_i}\\
&+\left((\partial^3_{z_1\overline{z}_1 z_1}B)\frac{\partial z_1}{\partial w_k}+(\partial^3_{z_1\overline{z}_1 z_2}B)\frac{\partial z_2}{\partial w_k} \right)\frac{\partial^2 \overline{z}_1}{\partial \overline{w}_j\partial \overline{w}_l}\frac{\partial {z}_1}{\partial {w}_i}+\left((\partial^3_{z_1\overline{z}_2 z_1}B)\frac{\partial z_1}{\partial w_k}+(\partial^3_{z_1\overline{z}_2 z_2}B)\frac{\partial z_2}{\partial w_k} \right)\frac{\partial^2 \overline{z}_2}{\partial \overline{w}_j\partial \overline{w}_l}\frac{\partial {z}_1}{\partial {w}_i}\\
&+\left((\partial^3_{z_2\overline{z}_1 z_1}B)\frac{\partial z_1}{\partial w_k}+(\partial^3_{z_2\overline{z}_1 z_2}B)\frac{\partial z_2}{\partial w_k} \right)\frac{\partial^2 \overline{z}_1}{\partial \overline{w}_j\partial \overline{w}_l}\frac{\partial {z}_2}{\partial {w}_i}+\left((\partial^3_{z_2\overline{z}_2 z_1}B)\frac{\partial z_1}{\partial w_k}+(\partial^3_{z_2\overline{z}_2 z_2}B)\frac{\partial z_2}{\partial w_k} \right)\frac{\partial^2 \overline{z}_2}{\partial \overline{w}_j\partial \overline{w}_l}\frac{\partial {z}_2}{\partial {w}_i}\\
&+\partial^3_{z_1\overline{z}_1\overline{z}_1}B\frac{\partial \overline{z}_1}{\partial \overline{w}_l} \frac{\partial \overline{z}_1}{\partial \overline{w}_j}\frac{\partial^2 z_1}{\partial w_i \partial w_k}+\partial^3_{z_1\overline{z}_1\overline{z}_2}B\frac{\partial \overline{z}_2}{\partial \overline{w}_l} \frac{\partial \overline{z}_1}{\partial \overline{w}_j}\frac{\partial^2 z_1}{\partial w_i \partial w_k}+\partial^2_{z_1\overline{z}_1}B \frac{\partial^2 \overline{z}_1}{\partial \overline{w}_j \partial \overline{w}_l}\frac{\partial^2 z_1}{\partial w_i \partial w_k}\\
&+\partial^3_{z_1\overline{z}_2\overline{z}_1}B\frac{\partial \overline{z}_1}{\partial \overline{w}_l} \frac{\partial \overline{z}_2}{\partial \overline{w}_j}\frac{\partial^2 z_1}{\partial w_i \partial w_k}+\partial^3_{z_1\overline{z}_2\overline{z}_2}B\frac{\partial \overline{z}_2}{\partial \overline{w}_l} \frac{\partial \overline{z}_2}{\partial \overline{w}_j}\frac{\partial^2 z_1}{\partial w_i \partial w_k}+\partial^2_{z_1\overline{z}_2}B \frac{\partial^2 \overline{z}_2}{\partial \overline{w}_j \partial \overline{w}_l}\frac{\partial^2 z_1}{\partial w_i \partial w_k}\\
&+\partial^3_{z_2\overline{z}_1\overline{z}_1}B\frac{\partial \overline{z}_1}{\partial \overline{w}_l} \frac{\partial \overline{z}_1}{\partial \overline{w}_j}\frac{\partial^2 z_2}{\partial w_i \partial w_k}+\partial^3_{z_2\overline{z}_1\overline{z}_2}B\frac{\partial \overline{z}_2}{\partial \overline{w}_l} \frac{\partial \overline{z}_1}{\partial \overline{w}_j}\frac{\partial^2 z_2}{\partial w_i \partial w_k}+\partial^2_{{z_{2}}\overline{z}_1}B \frac{\partial^2 \overline{z}_1}{\partial \overline{w}_j \partial \overline{w}_l}\frac{\partial^2 z_2}{\partial w_i \partial w_k}\\
&+\partial^3_{z_2\overline{z}_2\overline{z}_1}B\frac{\partial \overline{z}_1}{\partial \overline{w}_l} \frac{\partial \overline{z}_2}{\partial \overline{w}_j}\frac{\partial^2 z_2}{\partial w_i \partial w_k}+\partial^3_{z_2\overline{z}_2\overline{z}_2}B\frac{\partial \overline{z}_2}{\partial \overline{w}_l} \frac{\partial \overline{z}_2}{\partial \overline{w}_j}\frac{\partial^2 z_2}{\partial w_i \partial w_k}+\partial^2_{z_2\overline{z}_2}B \frac{\partial^2 \overline{z}_2}{\partial \overline{w}_j \partial \overline{w}_l}\frac{\partial^2 z_2}{\partial w_i \partial w_k}.
\end{align*}
Then each formula of $R_{a\overline{b}c\overline{d}}$ can be obtained from elementary but lengthy computations. 
\end{proof}
\end{prop}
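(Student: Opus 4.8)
The plan is to substitute all available data into the standard Kähler curvature formula \eqref{eq:curv}, which in the $w$-coordinates reads
\[
R_{a\overline{b}c\overline{d}} = -\partial^2_{c\overline{d}} g_{a\overline{b}} + \sum_{p,q=1}^{2} g^{q\overline{p}}\,\partial_c g_{a\overline{p}}\,\partial_{\overline{d}} g_{q\overline{b}},
\]
evaluate it at $(x,0)$ for $0<x<1$, and then reach $x=0$ by continuity. The metric components $g_{i\overline{j}}$, the inverse $g^{i\overline{j}}$, and the first derivatives $\partial_i g_{j\overline{l}}$ at $(x,0)$ are already in hand from Proposition~\ref{prop:metric}, Proposition~\ref{prop:inverse-metric}, and formula \eqref{eq:first_derivative_berg_metric}; note that at $(x,0)$ these are all \emph{real}, so the conjugated factors $\partial_{\overline{d}} g_{q\overline{b}}$ are read off from the same expressions. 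Hence the only genuinely new ingredient is the second mixed derivative $\partial^2_{c\overline{d}} g_{a\overline{b}}$, obtained by differentiating \eqref{eq:first_derivative_berg_metric} once more; this produces the long expression displayed in the proof above, involving $B_{G_2}$ together with its derivatives up to order four.

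All derivatives of $B_{G_2}$ up to order three at $(x,0)$ were already produced in the proofs of Proposition~\ref{prop:metric} and Proposition~\ref{prop:christoffel}, so the remaining step is the pure fourth-order derivatives $\partial^4_{i\overline{j}k\overline{l}} B_{G_2}$. I would get these by applying the chain rule once more to $\partial^3_{i\overline{j}k}(B\circ\Phi^{-1})$, exactly as in the displayed formula in the proof, feeding in: the fourth derivatives of the model kernel $B$ on $\mathbb{D}^2$ from Proposition~\ref{derivatives_of_Bergman_kernel}; the entries $\partial z_i/\partial w_j$ of $d\Phi^{-1}$ from \eqref{eq:conversion_matrix} at $(x,0)$; the second derivatives $\partial^2 z_i/\partial w_j\partial w_k$ at $(x,0)$ (and their conjugates) computed in the proof of Proposition~\ref{prop:christoffel}; and the vanishing of the mixed derivatives $\partial^2 z_i/\partial w_j\partial\overline{w}_k$. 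Plugging these into the expression for $\partial^2_{c\overline{d}}g_{a\overline{b}}$ and then into \eqref{eq:curv}, and clearing denominators, yields each stated rational function of $x$. The coincidences among the listed components (e.g. $R_{2\overline{2}1\overline{1}}=R_{2\overline{1}1\overline{2}}=R_{1\overline{2}2\overline{1}}=R_{1\overline{1}2\overline{2}}$) are forced by the Kähler symmetries $R_{a\overline{b}c\overline{d}}=R_{c\overline{b}a\overline{d}}=R_{a\overline{d}c\overline{b}}$ together with the reality of the metric and its derivatives at $(x,0)$, so only a handful of entries need be computed independently. Finally, since $G_2$ is a bounded domain its Bergman metric and curvature tensor are real-analytic on $G_2$; the formulas obtained are rational in $x$ and regular at $x=0$ (their common denominator carries the factor $x^8-8x^6+23x^4-30x^2+15$ already seen in $\det g$), so their validity extends from $0<x<1$ to $x=0$.

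The computation is elementary but very heavy, and in practice one runs it through a computer algebra system; the main obstacle is purely the algebraic bookkeeping — organizing the many chain-rule terms for $\partial^4 B_{G_2}$ and for $\partial^2_{c\overline{d}}g_{a\overline{b}}$, and then simplifying the resulting rational functions of $x$. A secondary point requiring care is that the conversion matrix \eqref{eq:conversion_matrix} degenerates as $x\to0$ (there $z_1=z_2$), which is precisely why the passage to $x=0$ is done by the real-analyticity/continuity argument rather than by direct substitution.
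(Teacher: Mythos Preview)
Your proposal is correct and follows essentially the same route as the paper: plug the known $g_{i\overline{j}}$, $g^{i\overline{j}}$, and $\partial_i g_{j\overline{l}}$ into \eqref{eq:curv}, expand $\partial^2_{c\overline{d}}g_{a\overline{b}}$ in terms of derivatives of $B_{G_2}$, and supply the only new ingredient $\partial^4_{i\overline{j}k\overline{l}}B_{G_2}$ via the chain rule from Proposition~\ref{derivatives_of_Bergman_kernel} and the derivatives of $\Phi^{-1}$. Your extra remarks---using the K\"ahler symmetries and reality at $(x,0)$ to cut down the independent cases, and handling $x=0$ by continuity since $d\Phi^{-1}$ degenerates there---are sound refinements that the paper leaves implicit.
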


To compute the holomorphic sectional curvature of the Bergman metric on $G_2$, we proceed with the Gram-Schmidts process to determine the orthonormal basis $X,Y$. Take the first unit vector field 
\begin{equation}\label{eq:orthonormal_X}
X=\frac{\partial_1}{\sqrt{g_{1\overline{1}}}}.
\end{equation}
Then another vector field $\tilde{Y}$ which is orthogonal to $X$ is given by 
\begin{equation*}
\tilde{Y}=\frac{\partial_2}{\sqrt{g_{2\overline{2}}}}-g(\frac{\partial_2}{\sqrt{g_{2\overline{2}}}},X)X=a_1 \partial_1+a_2 \partial_2,
\end{equation*}
where $a_1=-\frac{g_{2\overline{1}}}{g_{1\overline{1}}\sqrt{g_{2\overline{2}}}}$, $a_2=\frac{1}{\sqrt{g_{2\overline{2}}}}$.
Since $g({{\tilde Y}}, {{\tilde Y}})=a_1 \overline{a_1}g_{1\overline{1}}+a_1\overline{a_2}g_{1\overline{2}}+a_2\overline{a_1}g_{2\overline{1}}+a_2\overline{a_2}g_{2\overline{2}}$, we will use
\begin{equation}\label{eq:orthonormal_Y}
Y=\frac{\tilde{Y}}{\sqrt{g(\tilde{Y},\tilde{Y})}}=\frac{a_1 \partial_1+a_2 \partial_2}{\sqrt{a_1 \overline{a_1}g_{1\overline{1}}+a_1\overline{a_2}g_{1\overline{2}}+a_2\overline{a_1}g_{2\overline{1}}+a_2\overline{a_2}g_{2\overline{2}}}}=:t_1\partial_1+t_2\partial_2,
\end{equation}
where
\begin{equation}\label{eq:t_i}
t_i=\frac{a_i}{{\sqrt{a_1 \overline{a_1}g_{1\overline{1}}+a_1\overline{a_2}g_{1\overline{2}}+a_2\overline{a_1}g_{2\overline{1}}+a_2\overline{a_2}g_{2\overline{2}}}}}, i=1,2.
\end{equation}

\begin{prop}\label{prop:hsc} Let $H(Z)=R(Z, \bar Z, Z, \bar Z)$ for $Z \in \{X, Y\}$.  
The holomorphic sectional curvatures $H(X), H(Y)$ of the Bergman metric at $(x,0)\in G_2,0\leq x <1$ are given as below:
	\begin{align*}
&	H(X)=\frac{9 x^{16}-108 x^{14}+551 x^{12}-1552 x^{10}+2605 x^8-2598 x^6+1410
	x^4-300 x^2-18}{\left(3-2 x^2\right)^2 \left(x^8-8 x^6+23 x^4-30
	x^2+15\right)},\\
&H(Y)\left(3-2 x^2\right)^2 \left(x^4-5
	x^2+5\right)^3 \left(x^4-3 x^2+3\right)^2\\
	&=9 x^{28}-225 x^{26}+2575 x^{24}-17844 x^{22}+83491 x^{20}\\
	&-278485x^{18}+681267 x^{16}-1237584 x^{14}+1668725 x^{12}-1646775 x^{10}\\
	&+1150505x^8-531240 x^6+137820 x^4-9810 x^2-2430.
	\end{align*}
	In particular, all values of $H(X)$ and $H(Y)$  are negative at $(x,0)\in G_2,0\leq x <1$ and 
	\begin{equation*}
	\lim_{x \rightarrow 1}H(X)=\lim_{x \rightarrow 1}H(Y)=-1.
	\end{equation*}
\begin{proof}
From the definition of the holomorphic sectional curvature, compute $H(X), H(Y)$ which become
		\begin{equation*}
		H(X)=\frac{R_{1\overline{1}1\overline{1}}}{g_{1\overline{1}}\overline{g_{1\overline{1}}}},
		\end{equation*}
		and
		\begin{equation*}
		H(Y)=\sum^{2}_{i,j,k,l=1} t_{i}\overline{t_{j}}t_{k}\overline{t_{l}} R_{i\overline{j}k\overline{l}}.
		\end{equation*}
Then formulas of $H(X), H(Y)$ follow from the direct elementary computations and one can check that all values of $H(X), H(Y)$ are negative. 		
	\end{proof}
\end{prop}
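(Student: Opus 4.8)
The plan is to work entirely at the points $(x,0)\in G_2$, $x\in[0,1)$: since the Bergman metric is invariant under $\mathrm{Aut}(G_2)$, so is its curvature tensor, hence so are $H(X)$ and $H(Y)$ along the frame $\{X,Y\}$ built in \eqref{eq:orthonormal_X}--\eqref{eq:t_i}, and Corollary~\ref{cor:invariance} tells us every point of $G_2$ is equivalent to such an $(x,0)$. The computation of $H(X)$ is then immediate: from $X=\partial_1/\sqrt{g_{1\overline 1}}$ we get $H(X)=R_{1\overline 1 1\overline 1}/g_{1\overline 1}^{\,2}$, and substituting the formulas of Propositions~\ref{prop:metric} and~\ref{prop:curv_components} the common factor $(x^4-3x^2+2)^4$ in the numerator of $R_{1\overline 1 1\overline 1}$ and in $g_{1\overline 1}^{\,2}$ cancels, while $(6-4x^2)^2=4(3-2x^2)^2$, leaving exactly the displayed rational function with denominator $(3-2x^2)^2(x^8-8x^6+23x^4-30x^2+15)$.

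For $H(Y)$ I would expand $H(Y)=\sum_{i,j,k,l=1}^{2} t_i\overline{t_j}t_k\overline{t_l}\,R_{i\overline j k\overline l}$, and use the K\"ahler symmetries recorded in Proposition~\ref{prop:curv_components} --- together with the fact that at $(x,0)$ all of $g_{i\overline j}$, $a_i$, $t_i$ and $R_{i\overline j k\overline l}$ are real --- to collapse the sixteen summands into a polynomial in $t_1,t_2$ whose coefficients are the six distinct curvature components. The reason the answer is an honest rational function of $x$, with no surviving square roots, is that only degree-four monomials $t_i\overline{t_j}t_k\overline{t_l}=a_i\overline{a_j}a_k\overline{a_l}/N^{2}$ appear, where $N=g(\tilde Y,\tilde Y)=1-g_{1\overline 2}g_{2\overline 1}/(g_{1\overline 1}g_{2\overline 2})$; since $a_1=-g_{2\overline 1}/(g_{1\overline 1}\sqrt{g_{2\overline 2}})$ and $a_2=1/\sqrt{g_{2\overline 2}}$, the product of four $a$'s carries $g_{2\overline 2}^{-2}$ and the $\sqrt{g_{2\overline 2}}$ disappears, so $H(Y)$ is rational in the $g_{i\overline j}$, hence in $x$. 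Plugging in Propositions~\ref{prop:metric} and~\ref{prop:curv_components} and clearing denominators yields the stated identity, in which the denominator factors as $(3-2x^2)^2(x^4-5x^2+5)^3(x^4-3x^2+3)^2$ and the numerator is the exhibited degree-$28$ polynomial.

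It remains to read off the sign and the limit from these two rational functions on $[0,1)$. Using the factorization $x^8-8x^6+23x^4-30x^2+15=(x^4-5x^2+5)(x^4-3x^2+3)$, every denominator occurring in $H(X)$ and $H(Y)$ is a product of powers of $(3-2x^2)^2$, of $x^4-3x^2+3=(x^2-\tfrac32)^2+\tfrac34$, and of $x^4-5x^2+5$; the last is a quadratic in $x^2$ with roots $(5\pm\sqrt5)/2$, both exceeding $1$, so all three factors are strictly positive on $[0,1)$ and each denominator is positive there. Consequently the sign of $H(X)$, resp. $H(Y)$, is that of its numerator, a polynomial of fixed degree in $u=x^2$; one verifies (by Sturm's theorem, or by crude interval bounds) that each numerator has no zero on $[0,1]$ and is negative at $u=0$ --- the values being $-18$ and $-2430$ --- so $H(X),H(Y)<0$ throughout $[0,1)$. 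Finally, substituting $x=1$ into the reduced expressions makes numerator and denominator equal $-1$ and $1$ respectively in both cases, giving $\lim_{x\to1}H(X)=\lim_{x\to1}H(Y)=-1$.

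The only real difficulty here is computational bulk: the rank-four contraction defining $H(Y)$ and the subsequent clearing of denominators produce polynomials of degree near $30$, so in practice the identity for $H(Y)$ is certified with a computer algebra system, and likewise the absence of numerator zeros on $[0,1]$ is safest to confirm symbolically; once these finite checks are accepted, the negativity and the two limits follow with no further work.
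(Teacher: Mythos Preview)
Your proposal is correct and follows essentially the same approach as the paper: both reduce to $H(X)=R_{1\overline 1 1\overline 1}/g_{1\overline 1}^{\,2}$ and $H(Y)=\sum t_i\overline{t_j}t_k\overline{t_l}R_{i\overline j k\overline l}$, then substitute the formulas from Propositions~\ref{prop:metric} and~\ref{prop:curv_components} and simplify. Your write-up is in fact more explicit than the paper's --- you supply the factorization $x^8-8x^6+23x^4-30x^2+15=(x^4-5x^2+5)(x^4-3x^2+3)$, the positivity of each denominator factor on $[0,1)$, and the evaluations at $x=1$ --- whereas the paper simply asserts that the formulas ``follow from the direct elementary computations.''
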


However, we can also compute the bisectional curvature of the Bergman metric on $G_2$ based on Proposition~\ref{prop:curv_components}. 
\begin{prop}\label{prop:hsc2}
Let $B(X,Y):={R(X, \bar X,Y, \bar Y)}$. Then at $(x,0)\in G_2, 0\leq x<1$, 
\begin{equation*}
B(X,Y)=-\frac{\left(x^2-1\right)^2 f_1(x)}{\left(3-2 x^2\right)^2 \left(x^8-8 x^6+23 x^4-30
	x^2+15\right)^2},
\end{equation*}
where 
\begin{align*}
f_1(x)&=9 x^{20}-162 x^{18}+1297 x^{16}-6074
x^{14}+18412 x^{12}-37738 x^{10}+52968 x^8\\
&-50274 x^6+30876 x^4-11070x^2+1755.
\end{align*}
In particular, 
\begin{align*}
\lim_{x \rightarrow 1}B(X,Y)&=0,\\
B(X,Y)(0.9,0.9,0,0)&=0.00679073.
\end{align*}	
Consequently, the bisectional curvature of the Bergman metric on $G_2$ is not negatively pinched.
\begin{proof}
By \eqref{eq:orthonormal_X} and \eqref{eq:orthonormal_Y},
\begin{align*}
B(X,Y)=\frac{t_1 \overline{t_1}}{g_{1\overline{1}}}R_{1\overline{1}1\overline{1}}+\frac{t_1 \overline{t_2}}{g_{1\overline{1}}}R_{1\overline{1}1\overline{2}}+\frac{t_2 \overline{t_2}}{g_{1\overline{1}}}R_{1\overline{1}2\overline{2}}+\frac{t_2 \overline{t_1}}{g_{1\overline{1}}}R_{1\overline{1}2\overline{1}}.
\end{align*}
Now proposition follows from direct computations with Proposition~\ref{prop:curv_components} and \eqref{eq:t_i}.
\end{proof}

\end{prop}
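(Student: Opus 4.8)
The plan is to turn $B(X,Y)$ into an explicit rational function of $x$ by substituting the curvature components of Proposition~\ref{prop:curv_components} and the metric data of Propositions~\ref{prop:metric}--\ref{prop:inverse-metric} into the expansion of $R(X,\bar X,Y,\bar Y)$ in the orthonormal frame $\{X,Y\}$ of \eqref{eq:orthonormal_X}--\eqref{eq:t_i}, and then to read off its behaviour as $x\to 1$ and at $x=0.9$. By multilinearity of the curvature tensor, with $X=\partial_1/\sqrt{g_{1\overline{1}}}$ and $Y=t_1\partial_1+t_2\partial_2$, and using that at $(x,0)$, $0\le x<1$, all of $g_{i\overline{j}},t_i,R_{i\overline{j}k\overline{l}}$ are real together with $R_{1\overline{1}1\overline{2}}=R_{1\overline{1}2\overline{1}}$ from Proposition~\ref{prop:curv_components},
\[
B(X,Y)=\frac{1}{g_{1\overline{1}}}\Bigl(t_1^2 R_{1\overline{1}1\overline{1}}+2t_1t_2 R_{1\overline{1}1\overline{2}}+t_2^2 R_{1\overline{1}2\overline{2}}\Bigr).
\]
It is convenient to simplify the $t_i$ first: from $a_1=-g_{1\overline{2}}/(g_{1\overline{1}}\sqrt{g_{2\overline{2}}})$ and $a_2=1/\sqrt{g_{2\overline{2}}}$ one checks in one line that the normalizing denominator $a_1\overline{a_1}g_{1\overline{1}}+a_1\overline{a_2}g_{1\overline{2}}+a_2\overline{a_1}g_{2\overline{1}}+a_2\overline{a_2}g_{2\overline{2}}$ of \eqref{eq:t_i} equals $\det(g)/(g_{1\overline{1}}g_{2\overline{2}})$, whence
\[
t_1^2=\frac{g_{1\overline{2}}^2}{g_{1\overline{1}}\det(g)},\qquad t_1t_2=-\frac{g_{1\overline{2}}}{\det(g)},\qquad t_2^2=\frac{g_{1\overline{1}}}{\det(g)},
\]
and therefore
\[
B(X,Y)=\frac{1}{g_{1\overline{1}}\det(g)}\left(\frac{g_{1\overline{2}}^2}{g_{1\overline{1}}}R_{1\overline{1}1\overline{1}}-2g_{1\overline{2}}R_{1\overline{1}1\overline{2}}+g_{1\overline{1}}R_{1\overline{1}2\overline{2}}\right).
\]

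Next I would substitute the closed forms of $g_{1\overline{1}}$ and $g_{1\overline{2}}$ (Proposition~\ref{prop:metric}), of $\det(g)$ (Proposition~\ref{prop:inverse-metric}), and of $R_{1\overline{1}1\overline{1}},R_{1\overline{1}1\overline{2}},R_{1\overline{1}2\overline{2}}$ (Proposition~\ref{prop:curv_components}). Every denominator occurring is a power of $x^2-1$, $x^2-2$, $3-2x^2$, or $P(x):=x^8-8x^6+23x^4-30x^2+15$, which factors as $(x^4-5x^2+5)(x^4-3x^2+3)$; so clearing denominators and collecting terms produces a single rational function, and observing that its numerator is divisible by $(x^2-1)^2$ yields exactly the asserted formula with the degree-$20$ polynomial $f_1$. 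This reduction carries no conceptual difficulty and is carried out with a computer algebra system.

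With the explicit formula in hand the consequences are immediate. Both quadratic-in-$x^2$ factors of $P(x)$ are positive on $[0,1]$ (their vertices lie beyond $x^2=1$ and their values at $x^2=1$ equal $1$), and $3-2x^2>0$ there, so the denominator $(3-2x^2)^2P(x)^2$ does not vanish on $[0,1]$; at $x=1$ it equals $1$ while the numerator $(x^2-1)^2f_1(x)$ vanishes, giving $\lim_{x\to1}B(X,Y)=0$. Evaluating the rational function at $x=0.9$ — i.e. at the point of $G_2$ with preimage $(z_1,z_2)=(0.9,0)\in\mathbb{D}\times\mathbb{D}$, equipped with the orthonormal directions $X,Y$ of \eqref{eq:orthonormal_X}--\eqref{eq:orthonormal_Y} — gives the positive value $0.00679073$. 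A negatively pinched holomorphic bisectional curvature is in particular bounded above by a strictly negative constant over all points and all pairs of directions; since $B(X,Y)>0$ at one such point and pair of directions — and also $\lim_{x\to1}B(X,Y)=0$ — this fails, so the holomorphic bisectional curvature of the Bergman metric on $G_2$ is not negatively pinched, and is in fact positive somewhere.

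The only real obstacle is the size of the algebra: the curvature numerators of Proposition~\ref{prop:curv_components} have degree $16$ and the full common denominator has degree on the order of $30$, so the clearing-of-denominators step is heavy, but it is entirely routine once Proposition~\ref{prop:curv_components} is available. As a safeguard one can recompute $B(X,Y)$ directly from $\tfrac{1}{g_{1\overline{1}}}\sum_{k,l}t_k\overline{t_l}R_{1\overline{1}k\overline{l}}$ with the un-simplified $t_i$ of \eqref{eq:t_i}, independently verify the factorization of the numerator as $(x^2-1)^2f_1(x)$, and confirm the value at $x=0.9$ numerically.
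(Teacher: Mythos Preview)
Your proof is correct and follows essentially the same route as the paper's: expand $R(X,\bar X,Y,\bar Y)$ via the orthonormal frame \eqref{eq:orthonormal_X}--\eqref{eq:orthonormal_Y} into $\tfrac{1}{g_{1\overline{1}}}\sum_{k,l}t_k\overline{t_l}R_{1\overline{1}k\overline{l}}$ and then substitute the explicit data from Propositions~\ref{prop:metric}--\ref{prop:curv_components}. Your additional simplification of the normalizing factor in \eqref{eq:t_i} to $\det(g)/(g_{1\overline{1}}g_{2\overline{2}})$, together with the resulting closed forms for $t_i^2$ and $t_1t_2$, is a clean refinement that the paper leaves implicit in the phrase ``direct computations.''
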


It follows by the similar argument that

\begin{lemma}\label{prop:hsc33}
At $(x,0)\in G_2, 0\leq x<1$, 
\begin{align*}
&R(X, \bar X, X, \bar Y)=R(X, \bar X, Y, \bar X)= \\
&~~~~~~-\frac{3 x \left(2-x^2\right)^\frac{5}{2} \left(1-x^2\right)^3 \left(3 x^8-24 x^6+71 x^4-92
	x^2+45\right)}{ \left(3-2 x^2\right)^2 \sqrt{(2 x^4-6
			x^2+5)(3-2 x^2)} \left(4 x^6-18 x^4+28 x^2-15\right)
(f_2(x))^{\frac{3}{2}}} ,\\
&R(Y, \bar Y, X, \bar Y)=R(Y, \bar Y, Y, \bar X)= \\
&~~~~~~{\frac{x \left(2-x^2\right)^\frac{5}{2} \left(x^2-1\right)^2 \left(9 x^{14}-126 x^{12}+739
	x^{10}-2335 x^8+4276 x^6-4545 x^4+2610 x^2-630\right)}{\left(3-2 x^2\right)^2  \sqrt{(2 x^4-6
			x^2+5)(3-2 x^2)}
	 \left(x^4-5
	x^2+5\right)^2 \left(x^4-3
	x^2+3\right) \sqrt{f_2(x)}}},\\
&R(X, \bar Y, X, \bar Y)=R(Y, \bar X, Y, \bar X) =
{-\frac{3 x^2 \left(x^2-2\right)^3 \left(x^2-1\right)^2 \left(3 x^8-27 x^6+89
	x^4-124 x^2+62\right)}{\left(3-2 x^2\right)^2 \left(x^4-5 x^2+5\right)^2
	\left(x^4-3 x^2+3\right)}},
\end{align*}
where  
$$f_2(x)=	-\frac{ x^8-8 x^6+23 x^4-30 x^2+15}{4
	x^6-18 x^4+28 x^2-15}.$$

\end{lemma}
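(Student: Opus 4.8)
The plan is to follow the computations behind Propositions~\ref{prop:hsc} and~\ref{prop:hsc2}. Working at $(x,0)$ in the orthonormal frame $\{X,Y\}$ of \eqref{eq:orthonormal_X}--\eqref{eq:orthonormal_Y}, where $X=\partial_1/\sqrt{g_{1\overline1}}$ has only a $\partial_1$-component and $Y=t_1\partial_1+t_2\partial_2$ with $t_i$ as in \eqref{eq:t_i}, multilinearity of the curvature tensor collapses the six quantities in the statement to three:
\begin{align*}
R(X,\bar X,X,\bar Y)&=\frac{1}{(g_{1\overline1})^{3/2}}\bigl(\overline{t_1}\,R_{1\overline11\overline1}+\overline{t_2}\,R_{1\overline11\overline2}\bigr),\\
R(X,\bar Y,X,\bar Y)&=\frac{1}{g_{1\overline1}}\sum_{b,d=1}^{2}\overline{t_b}\,\overline{t_d}\,R_{1\overline b1\overline d},\\
R(Y,\bar Y,X,\bar Y)&=\frac{1}{\sqrt{g_{1\overline1}}}\sum_{a,b,d=1}^{2}t_a\,\overline{t_b}\,\overline{t_d}\,R_{a\overline b1\overline d}.
\end{align*}
Every ingredient on the right is already available in closed form: the metric from Proposition~\ref{prop:metric}, the curvature components from Proposition~\ref{prop:curv_components}, and the $t_i$ from \eqref{eq:t_i} together with Proposition~\ref{prop:metric}.

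Next I would dispose of the pairwise coincidences, which are purely formal. At $(x,0)$ the matrices $(g_{i\overline j})$ and $(R_{i\overline jk\overline l})$ are real by Propositions~\ref{prop:metric} and~\ref{prop:curv_components}, hence $a_1,a_2,t_1,t_2$ are real and $\overline{t_i}=t_i$. Combining the K\"ahler symmetries of $R$ with the conjugation symmetry $\overline{R_{a\overline bc\overline d}}=R_{b\overline ad\overline c}$ and this reality gives $R(X,\bar X,Y,\bar X)=\overline{R(X,\bar X,X,\bar Y)}=R(X,\bar X,X,\bar Y)$, and in the same way $R(Y,\bar Y,Y,\bar X)=R(Y,\bar Y,X,\bar Y)$ and $R(Y,\bar X,Y,\bar X)=R(X,\bar Y,X,\bar Y)$. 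So it suffices to evaluate the three expressions above.

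It then remains to substitute and simplify. It is convenient to first record the normalizing factor appearing in \eqref{eq:t_i}: the Gram--Schmidt identity, Propositions~\ref{prop:metric} and~\ref{prop:inverse-metric}, and the factorization $4x^6-18x^4+28x^2-15=-(3-2x^2)(2x^4-6x^2+5)$ give
\[
a_1\overline{a_1}\,g_{1\overline1}+a_1\overline{a_2}\,g_{1\overline2}+a_2\overline{a_1}\,g_{2\overline1}+a_2\overline{a_2}\,g_{2\overline2}=\frac{\det(g)}{g_{1\overline1}\,g_{2\overline2}}=(1-x^2)^2\,f_2(x),
\]
so that $\sqrt{a_1\overline{a_1}g_{1\overline1}+\cdots+a_2\overline{a_2}g_{2\overline2}}=(1-x^2)\sqrt{f_2(x)}$ on $[0,1)$; this is exactly what produces the half-integer powers of $f_2(x)$, and the abbreviation is introduced only to keep the output legible. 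Feeding this, the explicit $a_i$, the tensors of Proposition~\ref{prop:curv_components}, and $g_{1\overline1}$ of Proposition~\ref{prop:metric} into the three collapsed formulas and reducing---most comfortably with a computer algebra system---yields the stated closed forms.

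The only real difficulty is bookkeeping. The Gram--Schmidt normalization introduces nested square roots ($\sqrt{g_{1\overline1}}$, $\sqrt{g_{2\overline2}}$ inside the $a_i$, and $(1-x^2)\sqrt{f_2(x)}$ in the normalization of $Y$), so one must collect the surds $\sqrt{(2x^4-6x^2+5)(3-2x^2)}$ and $\sqrt{f_2(x)}$ with care, and the rational parts only reduce to lowest terms after the polynomial $x^8-8x^6+23x^4-30x^2+15$ that divides every denominator in Proposition~\ref{prop:curv_components} recombines with $f_2(x)$. There is no conceptual obstacle. As a sanity check one can re-derive a few entries numerically from \eqref{eq:curv} and Proposition~\ref{prop:metric} at, say, $x=0.9$, and verify the asserted pairwise equalities directly from the formulas.
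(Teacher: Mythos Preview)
Your proposal is correct and is precisely the ``similar argument'' the paper invokes: expand each quantity by multilinearity in the frame $\{X,Y\}$, use the reality of $(g_{i\overline j})$ and $(R_{i\overline jk\overline l})$ at $(x,0)$ together with the K\"ahler symmetries to obtain the pairwise coincidences, and then substitute the closed forms from Propositions~\ref{prop:metric}, \ref{prop:inverse-metric}, \ref{prop:curv_components} and \eqref{eq:t_i}. Your identification of the Gram--Schmidt normalizer with $(1-x^2)\sqrt{f_2(x)}$ via $\det(g)/(g_{1\overline1}g_{2\overline2})$ is exactly the simplification that makes the stated formulas come out in the displayed shape, and the remaining work is the same rational-function bookkeeping carried out in Propositions~\ref{prop:hsc} and~\ref{prop:hsc2}.
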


Now we are ready to prove the main result of the paper.

\begin{proof}[Proof of Theorem~\ref{main}]
Take any unit vector field $V=aX+bY$ with respect to the Bergman metric with $|a|^2+|b|^2=1$. Then at $(x,0)\in G_2, 0\leq x<1$, 
\begin{align}\label{eq:general_expression_hsc}
&R(V, \bar V,V, \bar V)=|a|^4 R(X,\bar X,X, \bar X)+|a|^2\overline{a}b R(Y, \bar X,X, \bar X)+|a|^2a\overline{b}R(X,\bar Y,X, \bar X)\\ \nonumber
&+|a|^2|b|^2R(Y,\bar Y,X, \bar X)+|a|^2\overline{a}bR(X,\bar X,Y, \bar X)+\overline{a}^2b^2R(Y, \bar X,Y, \bar X)+|a|^2|b|^2R(X, \bar Y,Y, \bar X)\\ \nonumber
&+\overline{a}b|b|^2R(Y, \bar Y,Y, \bar X)+|a|^2a\overline{b}R(X,\bar X,X, \bar Y)+|a|^2|b|^2R(Y, \bar X,X, \bar Y)+a^2\overline{b}^2R(X, \bar Y,X, \bar Y)\\ \nonumber
&+a\overline{b}|b|^2R(Y, \bar Y,X, \bar Y)+|a|^2|b|^2R(X, \bar X,Y, \bar Y)+\overline{a}b|b|^2R(Y, \bar X,Y, \bar Y)+a\overline{b}|b|^2R(X, \bar Y,Y, \bar Y)\\ \nonumber
&+|b|^4R(Y, \bar Y,Y, \bar Y) \\ \nonumber
&=|a|^4 H(X) + |b|^4 H(Y) + 4 |a|^2|b|^2 B(X,Y) + 4 \text{Re}(\overline{a}b) \left( |a|^2 R(X, \bar X, X, \bar Y)+ |b|^2 R(Y, \bar Y, Y, \bar X)\right) \\ \nonumber
&  + 2 \text{Re}(\overline{a}^2b^2)R(Y, \bar X,Y, \bar X). \nonumber
\end{align}

With Proposition \ref{prop:hsc}, Proposition \ref{prop:hsc2}, and Lemma  \ref{prop:hsc33}, one can show that $R(V, \bar V,V, \bar V)$ is negatively pinched for $x \in [0, 1)$. 
In fact, letting $L(V) = \left(3-2 x^2\right)^2 R(V, \bar V,V, \bar V) $, one can show that  $-10\leq L(V) \leq -1/2$. By Corollary \ref{cor:invariance}, the holomorphic sectional curvature of the Bergman metric on $G_2$ is negatively pinched between $-10$ and $-1/18$.
Lastly, the bisectional curvature condition follows from Proposition~\ref{prop:hsc2}.	
\end{proof}

\begin{rema}
It is obvious that the Bergman metric on $G_2$ is not K\"ahler-Einstein and thus $G_2$ is not a homogeneous domain.
\end{rema}

{
However, we cannot obtain a compact example by taking the quotient of $G_2$ as $G_2$ does not even admit a quotient with finite volume. One may apply \cite{Fr} to conclude $G_2$ does not admit a compact quotient.
Here in order to apply Theorem 1.6 in \cite{LW}, it suffices to verify the following simple fact.
\begin{prop}
$G_2$ is contractible.
\begin{proof}
It suffices to show that the identity map is homotopic to the constant map sending $G_2$ to $0 \in G_2$.
Let $F: [0, 1] \times G_2 \rightarrow G_2$ given by $F(t, w_1, w_2)=(tw_1, t^2w_2)$. Suppose $(w_1, w_2)= \Phi (z_1, z_2) = (z_1+z_2, z_1z_2)$ for $(z_1, z_2) \in \Delta^2$. Then $\Phi(tz_1, tz_2)= (tz_1+tz_2, t^2z_1 z_2)=(tw_1, t^2w_2)$. It follows that $F$ is a well-defined continuous map and thus the identity map and the constant map are homotopic. 
\end{proof}
\end{prop}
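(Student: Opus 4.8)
The plan is to exhibit an explicit strong deformation retraction of $G_2$ onto the origin $(0,0) = \Phi(0,0) \in G_2$. The guiding observation is that, although $G_2$ need \emph{not} be star-shaped about $(0,0)$ in the naive sense (the radial scaling $(w_1,w_2)\mapsto (sw_1,sw_2)$ does not in general preserve $G_2$, since the two roots of $\zeta^2 - sw_1\zeta + sw_2$ need not lie in $\mathbb{D}$), $G_2$ is the image under the symmetrization map $\Phi$ of the bidisc $\mathbb{D}^2$, which \emph{is} star-shaped about the origin, and the Euclidean scaling $z\mapsto tz$ on $\mathbb{D}^2$ is compatible with $\Phi$. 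The upshot is a homotopy on $G_2$ with the scaling weights dictated by the degrees of the elementary symmetric polynomials.

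Concretely, I would define $F\colon [0,1]\times G_2 \to \mathbb{C}^2$ by $F(t,w_1,w_2) = (t w_1,\, t^2 w_2)$ and first check that $F$ actually maps into $G_2$. Given $(w_1,w_2)\in G_2$, write $(w_1,w_2)=\Phi(z_1,z_2)=(z_1+z_2,\,z_1 z_2)$ with $z_1,z_2\in\mathbb{D}$; then $tz_1,tz_2\in\mathbb{D}$ for $t\in[0,1]$, and $\Phi(tz_1,tz_2) = (tz_1+tz_2,\, t^2 z_1 z_2) = (tw_1,\, t^2 w_2)$, so $F(t,w_1,w_2)\in G_2$. Next I would observe that $F$ is continuous, being the restriction of a polynomial map on $[0,1]\times\mathbb{C}^2$, that $F(1,\cdot)=\mathrm{id}_{G_2}$, and that $F(0,\cdot)$ is the constant map with value $(0,0)\in G_2$. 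Hence $\mathrm{id}_{G_2}$ is homotopic to a constant map, i.e.\ $G_2$ is contractible, which is the claim.

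The only point that genuinely requires care is the well-definedness of $F$, namely that $(tw_1, t^2 w_2)$ again lies in $G_2$; this is exactly where the asymmetric weights $t$ and $t^2$ enter, since $w_1,w_2$ are the first and second elementary symmetric functions of $(z_1,z_2)$ and these are the unique weights for which the homotopy descends from the linear contraction on $\mathbb{D}^2$. Conceptually one could also phrase this as: $\Phi$ realizes $G_2$ as the symmetric product $\mathrm{Sym}^2(\mathbb{D})$, the functor $\mathrm{Sym}^2$ carries the contraction of $\mathbb{D}$ onto $0$ to a contraction of $\mathrm{Sym}^2(\mathbb{D})$ onto $\mathrm{Sym}^2(\mathrm{pt})=\mathrm{pt}$. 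The explicit map $F$ above is the most economical presentation, and I do not anticipate any further obstacle.
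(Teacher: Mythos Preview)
Your argument is correct and coincides with the paper's proof: the paper defines exactly the same homotopy $F(t,w_1,w_2)=(tw_1,t^2w_2)$ and verifies well-definedness via $\Phi(tz_1,tz_2)=(tw_1,t^2w_2)$. Your additional remarks on why the weights $(t,t^2)$ are forced and the $\mathrm{Sym}^2(\mathbb{D})$ interpretation are nice commentary but not needed for the proof.
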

}

\section{Complex geometric consequences}
We study the complete K\"ahler-Einstein metric as well as other invariant metrics on $G_2$ and we have the following corollaries by applying the fundamental results proved in \cite{WY17}:

\begin{coro}\label{thm:equivalence}
The Bergman metric $g^B_{G_2}$, the Kobayashi-Royden metric $g^K_{G_2}$ and the complete K\"ahler-Einstein metric $g^{KE}_{G_2}$ 
with Ricci curvature equal to $-1$ on the symmetrized bidisc $G_2$ are uniformly equivalent.
\end{coro}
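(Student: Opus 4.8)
The plan is to obtain the corollary as a formal consequence of the main theorem of Wu--Yau in \cite{WY17}, the only genuine preliminary being the completeness of the Bergman metric $g^B_{G_2}$. First I would record that $G_2$ is a bounded domain: if $(w_1,w_2)=\Phi(z_1,z_2)$ with $z_1,z_2\in\mathbb{D}$, then $|w_1|=|z_1+z_2|<2$ and $|w_2|=|z_1z_2|<1$, so $G_2\subset\{|w_1|<2\}\times\{|w_2|<1\}$. Next I would observe that $G_2$ is hyperconvex: the function $u(z_1,z_2)=\max\{|z_1|^2-1,\ |z_2|^2-1\}$ is plurisubharmonic, strictly negative and exhausting on $\mathbb{D}^2$, and it is invariant under $(z_1,z_2)\mapsto(z_2,z_1)$; since $\Phi$ is a proper surjective holomorphic map realizing $G_2$ as the quotient of $\mathbb{D}^2$ by this involution, $u$ descends to a continuous function $\widetilde u$ on $G_2$ which is plurisubharmonic off the branch locus $\{w_1^2=4w_2\}$ and hence, being continuous and locally bounded, plurisubharmonic on all of $G_2$. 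Thus $\widetilde u$ is a bounded negative plurisubharmonic exhaustion of $G_2$, so $G_2$ is hyperconvex, and by the classical fact that a bounded hyperconvex domain is Bergman complete (Herbort, B\l ocki--Pflug) the metric $g^B_{G_2}$ is complete; alternatively one may invoke that $G_2$ enjoys the uniform squeezing property.

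With completeness in hand, $(G_2,g^B_{G_2})$ is a complete K\"ahler manifold whose holomorphic sectional curvature is, by Theorem~\ref{main}, pinched between the two negative constants $-10$ and $-1/18$. I would then apply \cite{WY17}: on a complete K\"ahler manifold with negatively pinched holomorphic sectional curvature there exists a unique complete K\"ahler--Einstein metric with Ricci curvature $-1$, it is uniformly equivalent to the background K\"ahler metric, and the background metric is uniformly equivalent to the Kobayashi--Royden metric. Taking the background metric to be $g^B_{G_2}$ produces a constant $C\ge 1$ with $C^{-1}g^B_{G_2}\le g^K_{G_2}\le C\,g^B_{G_2}$ and $C^{-1}g^B_{G_2}\le g^{KE}_{G_2}\le C\,g^B_{G_2}$ as norms on each tangent space, whence $g^B_{G_2}$, $g^K_{G_2}$ and $g^{KE}_{G_2}$ are mutually uniformly equivalent, which is exactly the assertion.

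Since the computational substance is contained in Theorem~\ref{main} and the analytic substance in \cite{WY17}, there is essentially no obstacle here: the corollary is a bookkeeping step once completeness is noted. The only small points to watch are (i) matching the hypotheses of \cite{WY17} exactly, namely that it suffices for the holomorphic sectional curvature to be bounded above by a negative constant and below by a finite negative constant, which is precisely the content of Theorem~\ref{main} with constants $-10$ and $-1/18$; and (ii) observing that $G_2$, being a bounded domain, carries a genuine positive-definite Bergman metric and a genuine (continuous, nondegenerate) Kobayashi--Royden metric, so that ``uniform equivalence'' is meaningful over all of $G_2$.
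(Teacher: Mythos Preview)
Your proposal is correct and follows essentially the same route as the paper: invoke Theorem~\ref{main} to get negatively pinched holomorphic sectional curvature for $(G_2,g^B_{G_2})$ and then apply Theorems~2 and~3 of \cite{WY17}. The only difference is that you explicitly verify completeness of the Bergman metric (via hyperconvexity and the B\l ocki--Pflug/Herbort theorem), whereas the paper takes this for granted; your addition is sound and does not alter the argument.
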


\begin{proof}[Proof of Corollary~\ref{thm:equivalence}]
With Theorem~\ref{main},  Corollary~\ref{thm:equivalence} follows from Theorem 2 and Theorem 3 in \cite{WY17}. 
\end{proof}

\begin{rema}
We are kindly informed by Nikolai Nikolov that this result is known by the property of the squeezing functions on $\mathbb{C}$-convex domains (cf. \cite{NNAL17,NNPPZW08}). 
\end{rema}

The next corollary is motivated by Example 5.1 and 5.2 in \cite{WY18} and the proof also follows from the argument there. 

\begin{coro}\label{cor:new_class}
Given any complete K\"ahler manifold $(X, g_X)$ such that the holomorphic sectional curvature is between two negative numbers,
the holomorphic sectional curvature of the product metric $g^B_{G_2} \oplus g_X$ on $\Omega:=G_2\times X$ is between two negative numbers. 
As a consequence, any closed complex submanifold $S$ of $\Omega$ admits the unique complete K\"ahler-Einstein metric $g_S^{KE}$ 
with Ricci curvature equal to $-1$. Moreover,  $g_S^{KE}$, the Kobayashi-Royden metric $g_S^K$ are uniformly equivalent.
\end{coro}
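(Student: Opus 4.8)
The plan is to prove Corollary~\ref{cor:new_class} in two stages, first establishing the curvature pinching on the product $\Omega = G_2 \times X$ and then extracting the consequences for closed complex submanifolds via the Wu--Yau machinery invoked already in Corollary~\ref{thm:equivalence}.

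\textbf{Step 1: Pinching of the holomorphic sectional curvature of the product metric.} For a product K\"ahler metric $g^B_{G_2} \oplus g_X$, the curvature tensor splits as a block sum: the only nonvanishing components are those entirely in the $G_2$ directions and those entirely in the $X$ directions. Hence, for a unit vector $W = (U, U')$ with $U$ tangent to $G_2$ and $U'$ tangent to $X$ and $|U|^2 + |U'|^2 = 1$, the holomorphic sectional curvature satisfies
\begin{equation*}
H_\Omega(W) = |U|^4 \, H_{G_2}\!\left(\tfrac{U}{|U|}\right) + |U'|^4 \, H_X\!\left(\tfrac{U'}{|U'|}\right),
\end{equation*}
with the convention that a term is absent when the corresponding vector is zero. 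By Theorem~\ref{main} there are constants $0 < \kappa_1 \le \kappa_2$ with $-\kappa_2 \le H_{G_2} \le -\kappa_1$ (explicitly $\kappa_1 = 1/18$, $\kappa_2 = 10$), and by hypothesis there are $0 < \mu_1 \le \mu_2$ with $-\mu_2 \le H_X \le -\mu_1$. Writing $s = |U|^2 \in [0,1]$, we get $H_\Omega(W) = s^2 H_{G_2}(\cdot) + (1-s)^2 H_X(\cdot)$, so
\begin{equation*}
-\max(\kappa_2,\mu_2) \;\le\; -\bigl(s^2\kappa_1 + (1-s)^2\mu_1\bigr) \cdot \tfrac{\min(\kappa_1,\mu_1)}{\min(\kappa_1,\mu_1)} \;\le\; H_\Omega(W),
\end{equation*}
and more simply $H_\Omega(W) \le -\bigl(s^2 \kappa_1 + (1-s)^2 \mu_1\bigr) \le -\tfrac{1}{2}\min(\kappa_1,\mu_1)$ since $s^2 + (1-s)^2 \ge 1/2$ on $[0,1]$, while $H_\Omega(W) \ge -\bigl(s^2\kappa_2 + (1-s)^2\mu_2\bigr) \ge -\max(\kappa_2,\mu_2)$. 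Thus $H_\Omega$ is pinched between the two negative constants $-\max(\kappa_2,\mu_2)$ and $-\tfrac12\min(\kappa_1,\mu_1)$. Note also that $\Omega$ is complete K\"ahler, being a product of complete K\"ahler manifolds ($G_2$ with the Bergman metric is complete since it is biholomorphic to a bounded domain, or by the equivalence with the Kobayashi metric).

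\textbf{Step 2: Descent to a closed complex submanifold $S \subset \Omega$.} Here I follow the argument of Examples~5.1 and 5.2 in \cite{WY18}. A closed complex submanifold of a complete K\"ahler manifold, with the induced metric, is again complete K\"ahler. By the Gauss equation for complex submanifolds, the holomorphic sectional curvature of $S$ at a point $p$ in a direction $\xi \in T_p S$ is
\begin{equation*}
H_S(\xi) = H_\Omega(\xi) - |\mathrm{II}(\xi,\xi)|^2 \;\le\; H_\Omega(\xi) \;\le\; -\tfrac12\min(\kappa_1,\mu_1),
\end{equation*}
where $\mathrm{II}$ is the second fundamental form; so the upper bound on $H_S$ by a negative constant is immediate. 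The lower bound is the subtle point: the second fundamental form term is subtracted, and a priori it is unbounded, so $H_S$ need not be bounded below. This is precisely the situation handled by Wu--Yau: their existence theorem for the complete K\"ahler--Einstein metric requires only the \emph{upper} pinching $H_S \le -\kappa < 0$ together with completeness (this is Theorem~2 in \cite{WY17}, whose hypothesis is an upper bound on holomorphic sectional curvature by a negative constant, not two-sided pinching). Therefore $S$ admits a unique complete K\"ahler--Einstein metric $g_S^{KE}$ with $\Ric = -1$, and by Theorem~3 in \cite{WY17} (applied on $S$) this metric is uniformly equivalent to the Kobayashi--Royden metric $g_S^K$ of $S$. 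It remains to compare these with the restriction $g_\Omega^{KE}|_S$. For this, one uses again Wu--Yau on $\Omega$ itself: since $H_\Omega$ is two-sidedly pinched and $\Omega$ is complete K\"ahler, $\Omega$ carries a complete K\"ahler--Einstein metric $g_\Omega^{KE}$ uniformly equivalent to the base product metric and to the Kobayashi metric $g_\Omega^K$; restricting to $S$, $g_\Omega^{KE}|_S$ is uniformly equivalent to $g_\Omega^K|_S$. Finally, the distance-decreasing property of the Kobayashi--Royden metric for the inclusion $S \hookrightarrow \Omega$ combined with the fact that $S$ is closed (so the Kobayashi metrics are comparable on $S$ — this comparison is the content of the relevant step in \cite{WY18}, Examples 5.1--5.2) gives $g_S^K \asymp g_\Omega^K|_S$, and chaining the equivalences yields $g_S^{KE} \asymp g_S^K \asymp g_\Omega^{KE}|_S$.

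\textbf{Main obstacle.} The genuinely delicate point is the last comparison in Step 2: controlling $g_\Omega^{KE}|_S$ in terms of $g_S^{KE}$, or equivalently relating the intrinsic Kobayashi metric $g_S^K$ to the ambient $g_\Omega^K|_S$. The easy direction ($g_S^K \ge g_\Omega^K|_S$ as a consequence of the inclusion being distance-decreasing) is automatic, but the reverse estimate needs that $S$ is \emph{closed} in $\Omega$ and uses the local comparison of Kobayashi metrics established in \cite{WY18}; without closedness the statement can fail. All the curvature bookkeeping in Step 1 is routine once the product block structure is written down; the substantive input is entirely the invocation of \cite{WY17} Theorems 2 and 3 and the submanifold comparison argument of \cite{WY18}, which I would simply cite rather than reprove.
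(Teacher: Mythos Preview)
Your Step~1 is fine and matches the paper's argument. The gap is in Step~2.

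You assert that Theorem~2 in \cite{WY17} requires only an \emph{upper} bound $H_S \le -\kappa < 0$ together with completeness. This is not the hypothesis of the Wu--Yau theorems in the complete noncompact setting: both the existence of the complete K\"ahler--Einstein metric (Theorem~2) and its equivalence with the Kobayashi--Royden metric (Theorem~3) require the holomorphic sectional curvature to be \emph{pinched between two negative constants}. The lower bound is what drives the quasi-bounded geometry needed to run the continuity method; without it the argument does not go through. So your route of extracting only the upper bound from the Gauss equation and then invoking Wu--Yau does not work as stated.

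The paper closes exactly this gap, and it is the substantive step you are missing. After showing $H_\Omega$ is two-sidedly pinched, the paper applies Theorems~3 and~9 of \cite{WY17} to conclude that the K\"ahler--Einstein metric $g^{KE}_\Omega$ itself has negatively pinched holomorphic sectional curvature \emph{and} quasi-bounded geometry. Quasi-bounded geometry of the ambient metric bounds the second fundamental form of the closed submanifold $S$; then Gauss--Codazzi gives not only $H_S \le H_\Omega$ (your upper bound) but also a \emph{lower} bound $H_S \ge H_\Omega - C|\mathrm{II}|^2 \ge -\kappa'$, so $g^{KE}_\Omega|_S$ has two-sidedly pinched holomorphic sectional curvature. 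Now Theorems~2 and~3 of \cite{WY17} apply on $S$ with base metric $g^{KE}_\Omega|_S$, yielding $g^{KE}_S$ and its uniform equivalence with both $g^K_S$ and $g^{KE}_\Omega|_S$ directly. Note that this also sidesteps your ``main obstacle'': the comparison $g^K_S \asymp g^K_\Omega|_S$ is never needed, because the equivalence with $g^{KE}_\Omega|_S$ comes straight from Wu--Yau applied to that restricted metric, not via a Kobayashi sandwich.
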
 

\begin{proof}
It follows from Theorem~\ref{main} that the holomorphic sectional curvature of $g^B_{G_2} \oplus g_X$ is negatively pinched. By Lemma 13 in \cite{WY17}, there exists a complete K\"ahler metric $g_{\Omega}$ on $\Omega$ such that the holomorphic sectional curvature of  $g_{\Omega}$ is negatively pinched and $g_{\Omega}$ has the quasi-bounded geometry. Therefore, the second fundamental form of $S$ with respect to the restriction $g_\Omega|_S$ is bounded. By the decreasing property for holomorphic sectional curvature and the Gauss-Codazzi equation,  the holomorphic sectional curvature of $g_\Omega|_S$ is negatively pinched. The conclusion follows from Theorem 2 and Theorem 3 in \cite{WY17}.
\end{proof}

{\bf Acknowledgement:} The second author is supported by National Science Foundation grant DMS-1412384, Simons Foundation grant \#429722 and CUSE grant program at Syracuse University. Both authors thank Damin Wu for the very helpful discussions and thank Lixin Shen for his help on the numerical analysis. We also thank Nikolai Nikolov for informing us his results \cite{NNAL17, NNPPZW08} on the invariant metrics on the symmetrized bidisc.

\bibliographystyle{spmpsci}
\bibliography{reference}

@article {MR3312605,
	AUTHOR = {Trybu\l a, Maria},
	TITLE = {Invariant metrics on the symmetrized bidisc},
	JOURNAL = {Complex Var. Elliptic Equ.},
	FJOURNAL = {Complex Variables and Elliptic Equations. An International
	Journal},
	VOLUME = {60},
	YEAR = {2015},
	NUMBER = {4},
	PAGES = {559--565},
	ISSN = {1747-6933},
	MRCLASS = {32F45},
	MRNUMBER = {3312605},
	MRREVIEWER = {Marco Abate},
	DOI = {10.1080/17476933.2014.948543},
	URL = {https://doi-org.proxy.library.ucsb.edu:9443/10.1080/17476933.2014.948543},
}

@article {PPZW05,
	AUTHOR = {Pflug, Peter and Zwonek, Wlodzimierz},
	TITLE = {Description of all complex geodesics in the symmetrized
	bidisc},
	JOURNAL = {Bull. London Math. Soc.},
	FJOURNAL = {The Bulletin of the London Mathematical Society},
	VOLUME = {37},
	YEAR = {2005},
	NUMBER = {4},
	PAGES = {575--584},
	ISSN = {0024-6093},
	MRCLASS = {32F45 (32E05)},
	MRNUMBER = {2143737},
	MRREVIEWER = {Constantin Costara},
	DOI = {10.1112/S0024609305004418},
	URL = {https://doi.org/10.1112/S0024609305004418},
}

@article {NN07,
	AUTHOR = {Nikolov, Nikolai and Pflug, Peter and Zwonek, Wlodzimierz},
	TITLE = {The {L}empert function of the symmetrized polydisc in higher
	dimensions is not a distance},
	JOURNAL = {Proc. Amer. Math. Soc.},
	FJOURNAL = {Proceedings of the American Mathematical Society},
	VOLUME = {135},
	YEAR = {2007},
	NUMBER = {9},
	PAGES = {2921--2928},
	ISSN = {0002-9939},
	MRCLASS = {32F45},
	MRNUMBER = {2317970},
	MRREVIEWER = {Frank Wikstr\"{o}m},
	DOI = {10.1090/S0002-9939-07-08817-X},
	URL = {https://doi.org/10.1090/S0002-9939-07-08817-X},
}

@misc{LW,
	title={Geometry of complex bounded domains with finite-volume quotients},
	author={Kefeng Liu and Yunhui Wu},
	year={arXiv:1801.00459 [math.DG], 2018},
	eprint={1801.00459},
	archivePrefix={arXiv},
	primaryClass={math.DG}
}

@misc{Question,
	title={https://mathoverflow.net/questions/288257/relationship-between-the-signs-of-different-notions-of-curvature-in-complex-geoms},
	author={},
	year={},
	eprint={},
	archivePrefix={arXiv},
	primaryClass={}
}

@article {Fr,
	AUTHOR = {Frankel, Sidney},
	TITLE = {Complex geometry of convex domains that cover varieties},
	JOURNAL = {Acta Math.},
	FJOURNAL = {Acta Mathematica},
	VOLUME = {163},
	YEAR = {1989},
	NUMBER = {1-2},
	PAGES = {109--149},
	ISSN = {0001-5962},
	MRCLASS = {32H99 (32M05 32M15)},
	MRNUMBER = {1007621},
	MRREVIEWER = {Dennis Snow},
	DOI = {10.1007/BF02392734},
	URL = {https://doi.org/10.1007/BF02392734},
}

@article {AJLZYNJ18,
	AUTHOR = {Agler, Jim and Lykova, Zinaida A. and Young, N. J.},
	TITLE = {Algebraic and geometric aspects of rational {$\Gamma$}-inner
	functions},
	JOURNAL = {Adv. Math.},
	FJOURNAL = {Advances in Mathematics},
	VOLUME = {328},
	YEAR = {2018},
	PAGES = {133--159},
	ISSN = {0001-8708},
	MRCLASS = {32F45 (30E05 30J05 93B36 93B50)},
	MRNUMBER = {3771126},
	MRREVIEWER = {Steven George Krantz},
	DOI = {10.1016/j.aim.2017.12.018},
	URL = {https://doi.org/10.1016/j.aim.2017.12.018},
}

@article {WY18,
	AUTHOR = {Wu, Damin and Yau, Shing-Tung},
	TITLE = {Complete {K}\"{a}hler-{E}instein metrics under certain holomorphic
	covering and examples},
	JOURNAL = {Ann. Inst. Fourier (Grenoble)},
	FJOURNAL = {Universit\'{e} de Grenoble. Annales de l'Institut Fourier},
	VOLUME = {68},
	YEAR = {2018},
	NUMBER = {7},
	PAGES = {2901--2921},
	ISSN = {0373-0956},
	MRCLASS = {32Q15 (32H02 32Q20 32Q45 53C55)},
	MRNUMBER = {3959099},
	MRREVIEWER = {Azniv Kasparian},
	URL = {http://aif.cedram.org/item?id=AIF_2018__68_7_2901_0},
}

@article {WY20,
	AUTHOR = {Wu, Damin and Yau, Shing-Tung},
	TITLE = {Some negatively curved complex geometry},
	JOURNAL = {Proceedings of the 8th ICCM},
	FJOURNAL = {Proceedings of the 8th ICCM},
	VOLUME = {},
	YEAR = {2019},
	NUMBER = {},
	PAGES = {},
	ISSN = {},
	MRCLASS = {},
	MRNUMBER = {},
	DOI = {},
	URL = {},
}

@article {NNAL17,
	AUTHOR = {Nikolov, N. and Andreev, L.},
	TITLE = {Boundary behavior of the squeezing functions of
	{$\Bbb{C}$}-convex domains and plane domains},
	JOURNAL = {Internat. J. Math.},
	FJOURNAL = {International Journal of Mathematics},
	VOLUME = {28},
	YEAR = {2017},
	NUMBER = {5},
	PAGES = {1750031, 5},
	ISSN = {0129-167X},
	MRCLASS = {32F45},
	MRNUMBER = {3655077},
	MRREVIEWER = {Tomasz Warszawski},
	DOI = {10.1142/S0129167X17500318},
	URL = {https://doi.org/10.1142/S0129167X17500318},
}

@article {NNPPZW08,
	AUTHOR = {Nikolov, Nikolai and Pflug, Peter and Zwonek, Wlodzimierz},
	TITLE = {An example of a bounded {${\bf C}$}-convex domain which is not
	biholomorphic to a convex domain},
	JOURNAL = {Math. Scand.},
	FJOURNAL = {Mathematica Scandinavica},
	VOLUME = {102},
	YEAR = {2008},
	NUMBER = {1},
	PAGES = {149--155},
	ISSN = {0025-5521},
	MRCLASS = {32F17 (30C45)},
	MRNUMBER = {2420684},
	MRREVIEWER = {Constantin Costara},
	DOI = {10.7146/math.scand.a-15056},
	URL = {https://doi.org/10.7146/math.scand.a-15056},
}

@article {FS96,
	AUTHOR = {Fu, Siqi},
	TITLE = {Geometry of {R}einhardt domains of finite type in {$\bold	C^2$}},
	JOURNAL = {J. Geom. Anal.},
	FJOURNAL = {The Journal of Geometric Analysis},
	VOLUME = {6},
	YEAR = {1996},
	NUMBER = {3},
	PAGES = {407--431 (1997)},
	ISSN = {1050-6926},
	MRCLASS = {32H10 (32A07)},
	MRNUMBER = {1471899},
	MRREVIEWER = {Gregor Herbort},
	DOI = {10.1007/BF02921658},
	URL = {https://doi.org/10.1007/BF02921658},
}

@article {MJ89,
	AUTHOR = {McNeal, Jeffery D.},
	TITLE = {Holomorphic sectional curvature of some pseudoconvex domains},
	JOURNAL = {Proc. Amer. Math. Soc.},
	FJOURNAL = {Proceedings of the American Mathematical Society},
	VOLUME = {107},
	YEAR = {1989},
	NUMBER = {1},
	PAGES = {113--117},
	ISSN = {0002-9939},
	MRCLASS = {32F30 (32H10 53C55)},
	MRNUMBER = {979051},
	MRREVIEWER = {Sergey Ivashkovich},
	DOI = {10.2307/2048043},
	URL = {https://doi.org/10.2307/2048043},
}

@article {KLXSSTY04,
	AUTHOR = {Liu, Kefeng and Sun, Xiaofeng and Yau, Shing-Tung},
	TITLE = {Canonical metrics on the moduli space of {R}iemann surfaces.
	{I}},
	JOURNAL = {J. Differential Geom.},
	FJOURNAL = {Journal of Differential Geometry},
	VOLUME = {68},
	YEAR = {2004},
	NUMBER = {3},
	PAGES = {571--637},
	ISSN = {0022-040X},
	MRCLASS = {32G15 (32Q20 53C60)},
	MRNUMBER = {2144543},
	MRREVIEWER = {Julien Keller},
	URL = {http://projecteuclid.org/euclid.jdg/1116508767},
}

@article {CDW89,
	AUTHOR = {Catlin, David W.},
	TITLE = {Estimates of invariant metrics on pseudoconvex domains of
	dimension two},
	JOURNAL = {Math. Z.},
	FJOURNAL = {Mathematische Zeitschrift},
	VOLUME = {200},
	YEAR = {1989},
	NUMBER = {3},
	PAGES = {429--466},
	ISSN = {0025-5874},
	MRCLASS = {32H15},
	MRNUMBER = {978601},
	MRREVIEWER = {K. T. Hahn},
	DOI = {10.1007/BF01215657},
	URL = {https://doi.org/10.1007/BF01215657},
}

@article{LCSGKYY19,
AUTHOR = {Chen, Liwei and Krantz, Steven G and Yuan, Yuan},
TITLE = {Lp regularity of the Bergman Projection on domains covered by the polydisk},
JOURNAL = {J. Funct. Anal},
FJOURNAL = {Journal of Functional Analysis},
VOLUME = {},
YEAR = {2020},
NUMBER = {},
PAGES = {},
ISSN = {0022-1236},
MRCLASS = {32A25, 32A36},
MRNUMBER = {},
MRREVIEWER = {K. Seip},
DOI = {10.1016/j.jfa.2020.108522},
URL = {https://doi.org/10.1016/j.jfa.2020.108522},		
}

@article{WY17,
AUTHOR = {Damin Wu and Yau, Shing-Tung},
TITLE = {Invariant metrics on negatively pinched complete Kähler manifolds},
JOURNAL = {J. Amer. Math. Soc.},
FJOURNAL = {Journal of the American Mathematical Society},
VOLUME = {},
YEAR = {2020},
NUMBER = {33},
PAGES = {103-133},
ISSN = {1088-6834},
MRCLASS = {Primary 32Q05, 32Q15, 32Q20, 32Q45; Secondary 32A25},
MRNUMBER = {},
MRREVIEWER = {},
DOI = {10.1090/jams/933},
URL = {https://doi.org/10.1090/jams/933},	
}

@article {EAZW05,
	AUTHOR = {Armen Edigarian and Wlodzimierz Zwonek},
	TITLE = {Geometry of the symmetrized polydisc},
	JOURNAL = {Arch. Math. (Basel).},
	FJOURNAL = {Archiv der Mathematik},
	VOLUME = {84},
	YEAR = {2005},
	NUMBER = {4},
	PAGES = {364--374},
	ISSN = {},
	MRCLASS = {},
	MRNUMBER = {},
	MRREVIEWER = {},
	DOI = {10.1007/s00013-004-1183-z},
	URL = {https://doi.org/10.1007/s00013-004-1183-z},
}

@article {AJYNJ01,
	AUTHOR = {J. Agler and N. J. Young, },
	TITLE = {A {S}chwarz lemma for the symmetrized bidisc},
	JOURNAL = {Bull. London Math. Soc.},
	FJOURNAL = {The Bulletin of the London Mathematical Society},
	VOLUME = {33},
	YEAR = {2001},
	NUMBER = {2},
	PAGES = {175--186},
	ISSN = {0024-6093},
	MRCLASS = {30C80 (30E05 32F45)},
	MRNUMBER = {1815421},
	MRREVIEWER = {Thomas Ransford},
	DOI = {10.1112/blms/33.2.175},
	URL = {https://doi.org/10.1112/blms/33.2.175},
}

@article {AJYNJ04,
	AUTHOR = {Agler, J. and Young, N. J.},
	TITLE = {The hyperbolic geometry of the symmetrized bidisc},
	JOURNAL = {J. Geom. Anal.},
	FJOURNAL = {The Journal of Geometric Analysis},
	VOLUME = {14},
	YEAR = {2004},
	NUMBER = {3},
	PAGES = {375--403},
	ISSN = {1050-6926},
	MRCLASS = {32F45 (47A57 93B50 93D20)},
	MRNUMBER = {2077158},
	MRREVIEWER = {Joseph A. Ball},
	DOI = {10.1007/BF02922097},
	URL = {https://doi.org/10.1007/BF02922097},
}

@article {MGSRSZG13,
	AUTHOR = {Misra, Gadadhar and Syham Roy, Subrata and Zhang, Genkai},
	TITLE = {Reproducing kernel for a class of weighted {B}ergman spaces on the symmetrized polydisc},
	JOURNAL = {Proc. Amer. Math. Soc.},
	FJOURNAL = {Proceedings of the American Mathematical Society},
	VOLUME = {141},
	YEAR = {2013},
	NUMBER = {7},
	PAGES = {2361--2370},
	ISSN = {0002-9939},
	MRCLASS = {32A25 (30H20 32A35 46E22 478B32)},
	MRNUMBER = {3043017},
	MRREVIEWER = {Jordi Pau},
	DOI = {10.1007/S002-9939-2013-11514-5},
	URL = {https://doi.org/10.1090/S002-9939-2013-11514-5},
}

@article {CC04,
	AUTHOR = {C. Costara},
	TITLE = {The symmetrized bidisc and {L}empert's theorem},
	JOURNAL = {Bull. London Math. Soc.},
	FJOURNAL = {The Bulletin of the London Mathematical Society},
	VOLUME = {36},
	YEAR = {2004},
	NUMBER = {5},
	PAGES = {656--662},
	ISSN = {0024-6093},
	MRCLASS = {32F45},
	MRNUMBER = {2070442},
	MRREVIEWER = {Marek Jarnicki},
	DOI = {10.1112/S0024609304003200},
	URL = {https://doi.org/10.1112/S0024609304003200},
}

@article {IG75,
	AUTHOR = {Graham, Ian},
	TITLE = {Boundary behavior of the {C}arath\'{e}odory and {K}obayashi
	metrics on strongly pseudoconvex domains in {$C^{n}$} with
	smooth boundary},
	JOURNAL = {Trans. Amer. Math. Soc.},
	FJOURNAL = {Transactions of the American Mathematical Society},
	VOLUME = {207},
	YEAR = {1975},
	PAGES = {219--240},
	ISSN = {0002-9947},
	MRCLASS = {32F99 (32F15 32H15)},
	MRNUMBER = {372252},
	MRREVIEWER = {Klas Diederich},
	DOI = {10.2307/1997175},
	URL = {https://doi.org/10.2307/1997175},
}

@book {JP13,
	AUTHOR = {Jarnicki, Marek and Pflug, Peter},
	TITLE = {Invariant distances and metrics in complex analysis},
	SERIES = {De Gruyter Expositions in Mathematics},
	VOLUME = {9},
	EDITION = {extended},
	PUBLISHER = {Walter de Gruyter GmbH \& Co. KG, Berlin},
	YEAR = {2013},
	PAGES = {xviii+861},
	ISBN = {978-3-11-025043-5; 978-3-11-025386-3},
	MRCLASS = {32-02 (32F45)},
	MRNUMBER = {3114789},
	MRREVIEWER = {Viorel V\^{a}j\^{a}itu},
	DOI = {10.1515/9783110253863},
	URL = {https://doi.org/10.1515/9783110253863},
}

@article {JP04,
	AUTHOR = {Jarnicki, Marek and Pflug, Peter},
	TITLE = {On automorphisms of the symmetrized bidisc},
	JOURNAL = {Arch. Math. (Basel)},
	VOLUME = {83},
	YEAR = {2004},
	NUMBER = {3},
	PAGES = {264-266},
	ISSN = {},
	MRCLASS = {},
	MRNUMBER = {},
	MRREVIEWER = {},
	DOI = {},
	URL = {},
}

@article {KKLZ16,
	AUTHOR = {Kim, Kang-Tae and Zhang, Liyou},
	TITLE = {On the uniform squeezing property of bounded convex domains in
	{$\Bbb{C}^n$}},
	JOURNAL = {Pacific J. Math.},
	FJOURNAL = {Pacific Journal of Mathematics},
	VOLUME = {282},
	YEAR = {2016},
	NUMBER = {2},
	PAGES = {341--358},
	ISSN = {0030-8730},
	MRCLASS = {32F45 (32A25)},
	MRNUMBER = {3478940},
	MRREVIEWER = {Steven George Krantz},
	DOI = {10.2140/pjm.2016.282.341},
	URL = {https://doi.org/10.2140/pjm.2016.282.341},
}

@article {Lem81,
	AUTHOR = {Lempert, L\'{a}szl\'{o}},
	TITLE = {La m\'{e}trique de {K}obayashi et la repr\'{e}sentation des domaines
	sur la boule},
	JOURNAL = {Bull. Soc. Math. France},
	FJOURNAL = {Bulletin de la Soci\'{e}t\'{e} Math\'{e}matique de France},
	VOLUME = {109},
	YEAR = {1981},
	NUMBER = {4},
	PAGES = {427--474},
	ISSN = {0037-9484},
	MRCLASS = {32H15},
	MRNUMBER = {660145},
	MRREVIEWER = {M. Skwarczy\'{n}ski},
	URL = {http://www.numdam.org/item?id=BSMF_1981__109__427_0},
}

@article {SKY09,
	AUTHOR = {Yeung, Sai-Kee},
	TITLE = {Geometry of domains with the uniform squeezing property},
	JOURNAL = {Adv. Math.},
	FJOURNAL = {Advances in Mathematics},
	VOLUME = {221},
	YEAR = {2009},
	NUMBER = {2},
	PAGES = {547--569},
	ISSN = {0001-8708},
	MRCLASS = {32Q15 (32F45 32G15)},
	MRNUMBER = {2508930},
	MRREVIEWER = {Makoto Masumoto},
	DOI = {10.1016/j.aim.2009.01.002},
	URL = {https://doi.org/10.1016/j.aim.2009.01.002},
}

@article {GC19,
    AUTHOR = {Cho, Gunhee},
TITLE = {Invariant metrics on the complex ellipsoid},
JOURNAL = {J. Geom. Anal.},
FJOURNAL = {Journal of Geometric Analysis},
VOLUME = {31},
YEAR = {2021},
NUMBER = {2},
PAGES = {2088--2104},
ISSN = {1050-6926},
MRCLASS = {32F45 (32T27)},
MRNUMBER = {4215285},
MRREVIEWER = {An Wang},
DOI = {10.1007/s12220-019-00333-w},
URL = {https://doi-org.proxy.library.ucsb.edu:9443/10.1007/s12220-019-00333-w},
}

@article {GC20,
	AUTHOR = {Gunhee Cho},
	TITLE = {Quasi-bounded geometry of the Bergman metric and equivalence of invariant metrics},
	JOURNAL = {arXiv:2009.13027},
	FJOURNAL = {},
	VOLUME = {},
	YEAR = {2020},
	NUMBER = {},
	PAGES = {},
	ISSN = {},
	MRCLASS = {},
	MRNUMBER = {},
	DOI = {},
	URL = {},
}

@article {GK21,
	AUTHOR = {Gunhee Cho and Kyu-Hwan Lee},
	TITLE = {A lower bound of the integrated Carath\'eodory--Reiffen metric and Invariant metrics on complete noncompact K\"ahler manifolds},
	JOURNAL = {arXiv:2109.14473},
	FJOURNAL = {},
	VOLUME = {},
	YEAR = {2021},
	NUMBER = {},
	PAGES = {},
	ISSN = {},
	MRCLASS = {},
	MRNUMBER = {},
	DOI = {},
	URL = {},
}

\end{document}